\documentclass[11pt]{amsart}
\usepackage[colorlinks=true, pdfstartview=FitV, linkcolor=blue, 
citecolor=blue]{hyperref}

\usepackage{amssymb,bbm,amscd}
\usepackage{graphicx}
\usepackage{a4wide}

\DeclareFontFamily{U}{mathx}{\hyphenchar\font45}
\DeclareFontShape{U}{mathx}{m}{n}{ <5> <6> <7> <8> <9> <10>
   <10.95> <12> <14.4> <17.28> <20.74> <24.88> mathx10 }{}
\DeclareSymbolFont{mathx}{U}{mathx}{m}{n}
\DeclareFontSubstitution{U}{mathx}{m}{n}
\DeclareMathAccent{\widecheck}{0}{mathx}{"71}

\theoremstyle{plain}
\newtheorem{theorem}{Theorem}[section]
\newtheorem{proposition}[theorem]{Proposition}
\newtheorem{lemma}[theorem]{Lemma}

\theoremstyle{definition}
\newtheorem{remark}[theorem]{Remark}

\numberwithin{equation}{section}
 
\newcommand{\dd}{\,\mathrm{d}}
\newcommand{\ii}{\ts\mathrm{i}}

\newcommand{\ts}{\hspace{0.5pt}}
\newcommand{\nts}{\hspace{-0.5pt}}

\DeclareMathOperator{\dens}{\mathrm{dens}}
\DeclareMathOperator{\card}{\mathrm{card}}

\newcommand{\vL}{\varLambda}

\newcommand{\cA}{\mathcal{A}}
\newcommand{\cB}{\mathcal{B}}
\newcommand{\cD}{\mathcal{D}}

\newcommand{\cI}{\mathcal{I}}
\newcommand{\cL}{\mathcal{L}}

\newcommand{\cR}{\mathcal{R}}
\newcommand{\cT}{\mathcal{T}}

\newcommand{\ZZ}{\mathbb{Z}}
\newcommand{\QQ}{\mathbb{Q}}
\newcommand{\RR}{\mathbb{R}\ts}
\newcommand{\CC}{\mathbb{C}\ts}

\newcommand{\NN}{\mathbb{N}}

\newcommand{\GL}{\mathrm{GL}}

\newcommand{\exend}{\hfill $\Diamond$}
\newcommand{\rar}{\xrightarrow{\quad}}

\newcommand{\vol}{\mathrm{vol}}
\newcommand{\sinc}{\,\mathrm{sinc}}
\newcommand{\oplam}{\mbox{\Large $\curlywedge$}}

\newcommand{\bs}[1]{\boldsymbol{#1}}
\newcommand{\defeq}{\mathrel{\mathop:}=}

\newcommand{\myfrac}[2]{\frac{\raisebox{-2pt}{$#1$}}
  {\raisebox{0.5pt}{$#2$}}}

\begin{document}

\title[Convergence of Fourier--Bohr coefficients]{Convergence of
  Fourier--Bohr coefficients\\[2mm] for regular Euclidean model sets}

\author{Michael Baake}
\address{Fakult\"{a}t f\"{u}r Mathematik,
  Universit\"{a}t Bielefeld,\newline \hspace*{\parindent}Postfach
  100131, 33501 Bielefeld, Germany}
\email{mbaake@math.uni-bielefeld.de}

\author{Alan  Haynes}
\address{Department of Mathematics, University of Houston, \newline
\hspace*{\parindent}3551 Cullen Blvd., Houston, TX 77204-3008, USA}
\email{haynes@math.uh.edu}

\begin{abstract}
  It is well known that the Fourier--Bohr coefficients of regular
  model sets exist and are uniformly converging, volume-averaged
  exponential sums. Several proofs for this statement are known, all
  of which use fairly abstract machinery. For instance, there is one
  proof that uses dynamical systems theory and another one based on
  Meyer's theory of harmonious sets. Nevertheless, since the
  coefficients can be defined in an elementary way, it would be nice
  to have an alternative proof by similarly elementary means, which is
  to say by standard estimates of exponential sums under an
  appropriate use of the Poisson summation formula. Here, we present
  such a proof for the class of regular Euclidean model sets, that is,
  model sets with Euclidean physical and internal spaces and
  topologically regular windows with almost no boundary.
\end{abstract}

\maketitle
\thispagestyle{empty}

\section{Setting and statement of result}

Given a Delone set $\vL\subset\RR^d$, let $\delta^{}_{\!\vL}$ denote
the \emph{Dirac comb}\index{Dirac~comb} of $\vL$, that is, the
translation-bounded (hence tempered) measure
\[
    \delta^{}_{\!\vL} \, \defeq \sum_{\lambda\in\vL}\delta^{}_\lambda \ts ,
\]
where each $\delta_\lambda$ is a normalised Dirac point measure
supported at $\lambda$. For $R>0$, define
\mbox{$\vL^{}_R\defeq\vL\cap B^{}_{\nts R}$}, where
$B^{}_{\nts R}=B^{}_{\nts R}(0)$ is the cube of side length $2 R$
centred at $0$. We continue to use the notation $B^{}_{\nts R}$, as
this is a special case of the sup-norm ball of radius $R$, namely
\begin{equation}\label{BH-eq:balls}
     B^{}_{\nts R}(y^{}_{0}) \, \defeq \,
     \bigl\{y\in\RR^{d} : \| y-y^{}_{0} \|^{}_{\infty} \leqslant R \bigr\}
\end{equation}
with $\|\cdot\|^{}_{\infty}$ denoting the sup-norm. Clearly,
$\vol \bigl(B^{}_{\nts R} (y^{}_{0}) \bigr) = (2 R)^d$.

Next, let $\widehat{\delta^{}_{\!\vL_R}}$ denote the Fourier transform
of the finite measure $\delta^{}_{\!\vL_R}$. For any test function
$f$, a simple calculation reveals that
\[
  \widehat{\delta^{}_{\!\vL_R}}(f) \, = \int_{\RR^d}  f(t)
  \!\sum_{\lambda\in\vL^{}_{\nts R}}\! e(-t\lambda)\, \dd t \ts ,
\]
where $t\lambda = \langle t \ts | \ts \lambda \rangle$ is the standard
inner product of $t$ and $\lambda$ in $\RR^d$ and
\[
  e(y) \, \defeq \, \exp (2\pi \ii y)
  \quad \text{for } y\in\CC.
\]
This shows that $\widehat{\delta^{}_{\!\vL_R}}$ is the absolutely
continuous tempered measure with Radon--Nikodym density (or
derivative) $\vol(B^{}_{\nts R}) \, a^{}_{\nts R}$, where
\begin{equation}\label{BH-eq:FourBohrR}
    a^{}_{\nts R} ( t ) \, = \, \myfrac{1}{\vol(B^{}_{\nts R})}
    \sum_{\lambda\in\vL^{}_{\nts R}} e(-t\lambda).
\end{equation}
For each $t\in\RR^d$, let
\begin{equation}\label{BH-eq:FourBohr}
    a(t) \, \defeq \lim_{R\to\infty} a^{}_{\nts R} (t) \ts ,
\end{equation}
provided the limit exists. The complex numbers $a(t)$ are called the
natural \emph{Fourier--Bohr
  coefficients}\index{Fourier--Bohr~coefficient} associated to $\vL$,
or FB coefficients for short. Here, the term `natural' refers to the
use of balls for the volume averaging, and it is worth noting that the
type of ball will not make any difference for the existence of the
limit in the cases we consider below.

It is known (for instance by work of Hof \cite{BH-Hof} and
Lenz \cite{BH-Daniel}, see also \cite{BH-TAO1}) that, when $\vL$ is a
sufficiently nice model set, the FB coefficients exist and satisfy
the important relation
\begin{equation}\label{BH-eq:Inten1}
   \bigl| a(t) \bigr|^2 \ts = \, \widehat{\gamma}
   \bigl( \{t\} \bigr) ,
\end{equation}
where $\widehat{\gamma}$ is the \emph{diffraction
  measure}\index{diffraction} associated to $\vL$. A systematic
exposition of diffraction theory is provided in \cite[Chs.~8 and
9]{BH-TAO1}. For our purposes here, it is sufficient to know that, for
a regular model set with window $W$ and cut and project scheme
$(\RR^d,\RR^{k-d},\cL)$, the diffraction measure is a pure point
measure on $\RR^d$ supported on the Fourier module
$L^{\circledast} \defeq \pi (\cL^{*}) \subset \RR^d$ of the cut and
project scheme, with
\begin{equation}\label{BH-eq:Inten2}
  \widehat{\gamma}\bigl( \{t\} \bigr) \, = \,
  \biggl| \frac{\dens(\vL)\, \widehat{\bs{1}^{}_{W}}(-t^{\star})}
    {\vol(W)}\biggr|^2 \quad \text{for}~ t\in L^{\circledast},
\end{equation}
and $ \widehat{\gamma}\bigl( \{t\} \bigr) = 0$ otherwise. Here,
$\cL^{*}$ is the dual lattice of $\cL$ and $\pi$ is the canonical
projection to $\RR^d$, as detailed below in \eqref{BH-eq:CPS}. Note
also that
\[
    \dens (\vL) \, = \, \dens (\cL) \ts \vol (W) \ts .
\]
Relevant details of definitions related to cut and project schemes and
regular model sets are provided for the reader's convenience in the
next section, where we refer to \cite{BH-TAO1} for general background, 
notation and results.

As a physical interpretation of
Eqs.~\eqref{BH-eq:FourBohrR}--\eqref{BH-eq:Inten2}, the FB
coefficients $a(t)$ give the complex amplitudes of the diffraction
image produced from the collection of equal point scatterers with
locations described by $\vL$, and the graph of intensities of this
image is the squared modulus of the FB coefficients, at the points
of $L^{\circledast}$, while the intensities vanish everywhere else.

In this paper, we give a new proof of these properties and
connections for the case of regular Euclidean model sets, using
harmonic analysis and bounds for exponential sums. Our main result is
the following theorem.

\begin{theorem}\label{BH-THM.CONVERGENCE1}
  Let\/ $\vL\subset\RR^d$ be a regular Euclidean model set for the cut
  and project scheme\/ $(\RR^d, \RR^{k-d}, \cL)$, with lattice\/
  $\cL\subset \RR^k$ and window\/ $W\subset \RR^{k-d}$.  Then, its FB
  coefficients\/ $a(t)$ exist for all\/ $t\in\RR^d$, and satisfy
\[
  a(t) \, = \, \dens(\cL) \, 
  \widehat{\bs{1}^{}_{W}}(-t^{\star}) \, = \, \dens (\vL)\,
  \frac{ \widehat{\bs{1}^{}_{W}}(-t^{\star})}{\vol (W)}
  \qquad \text{ for all } \, t\in L^{\circledast},
\]  
  while they vanish for all other\/ $t$.
\end{theorem}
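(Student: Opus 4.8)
The plan is to convert the finite exponential sum in \eqref{BH-eq:FourBohrR} into a sum over the full lattice $\cL$ and then apply the Poisson summation formula to trade it for a sum over the dual lattice $\cL^{*}$, where the limit $R\to\infty$ becomes transparent. Writing each $x\in\cL\subset\RR^{k}=\RR^{d}\oplus\RR^{k-d}$ as $x=\bigl(\pi(x),\pi^{}_{\mathrm{int}}(x)\bigr)$ with $\pi^{}_{\mathrm{int}}$ the projection to internal space, and using $\vL=\{\pi(x):x\in\cL,\ \pi^{}_{\mathrm{int}}(x)\in W\}$, I would first rewrite
\[
  \sum_{\lambda\in\vL^{}_{\nts R}} e(-t\lambda) \, = \sum_{x\in\cL}
  \bs{1}^{}_{B^{}_{\nts R}}\!\bigl(\pi(x)\bigr)\,
  \bs{1}^{}_{W}\!\bigl(\pi^{}_{\mathrm{int}}(x)\bigr)\, e\bigl(-t\,\pi(x)\bigr),
\]
so that the summand is the restriction to $\cL$ of a function on $\RR^{k}$ that factorises across the two subspaces. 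Its Fourier transform at $y=(y^{}_{1},y^{}_{2})$ likewise factorises as $\widehat{\bs{1}^{}_{B^{}_{\nts R}}}(t+y^{}_{1})\,\widehat{\bs{1}^{}_{W}}(y^{}_{2})$, because the character $e(-t\,\pi(x))$ merely shifts the physical-space argument by $t$.

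Applying Poisson summation over $\cL$ and dividing by $\vol(B^{}_{\nts R})$ would then give
\[
  a^{}_{\nts R}(t) \, = \, \dens(\cL) \sum_{y\in\cL^{*}}
  \frac{\widehat{\bs{1}^{}_{B^{}_{\nts R}}}\bigl(t+\pi(y)\bigr)}
       {\vol(B^{}_{\nts R})}\, \widehat{\bs{1}^{}_{W}}\bigl(\pi^{}_{\mathrm{int}}(y)\bigr).
\]
The physical-space factor is a product of sinc kernels, $\vol(B^{}_{\nts R})^{-1}\,\widehat{\bs{1}^{}_{B^{}_{\nts R}}}(\xi)=\prod_{j}\sinc\bigl(2\pi R\,\xi^{}_{j}\bigr)$, which tends to $1$ for $\xi=0$ and to $0$ otherwise as $R\to\infty$. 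Granting for the moment that the limit may be taken termwise, only those $y\in\cL^{*}$ with $\pi(y)=-t$ survive. Such $y$ exist precisely when $t\in\pi(\cL^{*})=L^{\circledast}$, and then $y$ is unique because $\pi$ restricted to $\cL^{*}$ is injective (a defining property of the dual cut and project scheme). For that unique $y$ one has $\pi^{}_{\mathrm{int}}(y)=(-t)^{\star}=-t^{\star}$ by linearity of the star map, so the single surviving term yields $a(t)=\dens(\cL)\,\widehat{\bs{1}^{}_{W}}(-t^{\star})$, while $a(t)=0$ for $t\notin L^{\circledast}$. The second equality in the statement then follows from $\dens(\vL)=\dens(\cL)\,\vol(W)$.

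Two points require genuine care, and this is where the main work lies. First, the summand $\bs{1}^{}_{B^{}_{\nts R}}\otimes\bs{1}^{}_{W}$ is not smooth, so Poisson summation does not apply directly; I would sandwich $\bs{1}^{}_{W}$ between smooth approximants $\psi^{\pm}$ with $\psi^{-}\leqslant\bs{1}^{}_{W}\leqslant\psi^{+}$ (and likewise mollify the ball indicator), apply the formula to the resulting Schwartz-class product, and bound the error by the number of $\cL$-points in a shrinking neighbourhood of $\partial W$. This is exactly where the regularity hypothesis — a topologically regular window with $\vol(\partial W)=0$ — enters, ensuring that the boundary contribution is $o\bigl(\vol(B^{}_{\nts R})\bigr)$ after averaging; the boundary of $B^{}_{\nts R}$ contributes only $O(R^{d-1})$ points and is harmless. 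Second, the termwise passage to the limit must be justified: since $\sinc$ decays only like $\xi^{-1}$, I would use the rapid decay of $\widehat{\psi^{\pm}}$ together with the uniform discreteness of $\cL^{*}$ to dominate the tail of the sum over $\cL^{*}$ uniformly in $R$, so that dominated convergence applies. The hard part will be organising these two approximation arguments so that the smoothing errors and tail bounds vanish in the right order, namely $R\to\infty$ first and then $\psi^{\pm}\to\bs{1}^{}_{W}$, with $\widehat{\psi^{\pm}}(-t^{\star})\to\widehat{\bs{1}^{}_{W}}(-t^{\star})$ closing the estimate.
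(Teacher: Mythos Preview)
Your overall plan coincides with the paper's: rewrite the exponential sum as a lattice sum over $\cL$, mollify, apply the PSF, and isolate the single term $\theta=-t$ on the dual side. The paper implements this with convolution by small cubes rather than Schwartz functions (its explicit aim is an ``elementary'' argument), and for a general window it passes through a dyadic-cube decomposition together with explicit estimates for aligned cubes (Proposition~\ref{BH-prop:AlignedWindEst} and Section~\ref{BH-sec:general-case}) rather than mollifying $\bs{1}^{}_{W}$ directly.

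There is, however, a genuine gap in your dominated-convergence step. You write that the rapid decay of $\widehat{\psi^{\pm}}$ together with the uniform discreteness of $\cL^{*}$ will furnish an $R$-independent summable majorant. It will not: since $\pi|^{}_{\cL}$ is injective, the dual projection $\pi^{}_{\mathrm{int}}|^{}_{\cL^{*}}$ has \emph{dense} image in $\RR^{k-d}$ (this is the dual CPS), so for every bounded $K\subset\RR^{k-d}$ there are infinitely many $y\in\cL^{*}$ with $\pi^{}_{\mathrm{int}}(y)\in K$, and hence $\sum_{y\in\cL^{*}}\bigl|\widehat{\psi^{\pm}}\bigl(\pi^{}_{\mathrm{int}}(y)\bigr)\bigr|=\infty$. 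Decay in the internal direction alone is never enough; you must also use decay in the \emph{physical} direction. If you genuinely mollify $\bs{1}^{}_{B^{}_{\nts R}}$ by a \emph{fixed} Schwartz $\varphi$ (as you mention parenthetically), the physical factor becomes the sinc product times $\widehat{\varphi}\bigl(t+\pi(y)\bigr)$; the sinc part is bounded by $1$ uniformly in $R$, and $\bigl|\widehat{\varphi}\bigl(t+\pi(y)\bigr)\bigr|\cdot\bigl|\widehat{\psi^{\pm}}\bigl(\pi^{}_{\mathrm{int}}(y)\bigr)\bigr|$ is Schwartz on $\RR^{k}$, hence summable over $\cL^{*}$ and a legitimate $R$-free dominant. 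Your remark that ``$\sinc$ decays only like $\xi^{-1}$'' suggests you were attempting to run the argument \emph{without} this extra factor, which cannot succeed. This is precisely the difficulty the paper confronts head-on: its cube mollifiers decay only polynomially on the dual side, and the bulk of the work (Section~\ref{BH-sec:special-case} and the covering argument of Section~\ref{BH-sec:general-case}) is a careful partition of $\cL^{*}$ into regions where the \emph{combined} physical- and internal-space decay is controlled by hand.
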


To prove these results, we begin with a more precise summary of
notation and basic results.

\section{Notation and concepts}

Our general reference is \cite{BH-TAO1}. As above, for $x,y\in\RR^s$,
we use $x\ts y$ for the standard inner product. For any $y\in\CC$, we
set $e(y)\defeq\exp(2\pi\ii y)$. If $D\subseteq\RR^s$, we write
$\bs{1}^{}_{\nts D}\colon \RR^s\rar\{0,1\}$ for the indicator function
of $D$, and $\vol(D)$ for the $s$-dimensional Lebesgue measure of $D$,
when it is well defined. For the remainder of the paper, we shall
make a notational simplification and denote the sup-norm of
$x\in\RR^s$ by $|x|=\max\{|x_i|: 1\leqslant i\leqslant s\}$. In view
of the underlying Cartesian product structure, all our balls will be
defined using the sup-norm, as in \eqref{BH-eq:balls}.

A \emph{lattice}\index{lattice} $\cL\subset \RR^s$ is a co-compact,
discrete subgroup of $\RR^s$, so $\RR^s/\cL$ has a representation as a
relatively compact (hence measurable) fundamental domain of $\cL$
within $\RR^s$, which is often derived from its Voronoi or its Delone
cell by removing part of the boundary. The density\index{density} of
$\cL$, denoted by $\dens(\cL)$, exists uniformly and is the reciprocal
of the volume of such a fundamental domain
\cite[Ex.~2.6]{BH-TAO1}. The \emph{dual lattice}\index{dual~lattice}
of $\cL$ is the lattice $\cL^*\subset \RR^s$ given by
\[
    \cL^* \, = \, \bigl\{y\in \RR^s : y\ts x\in\ZZ
    ~\text{for all}~x\in\cL\bigr\} \ts .
\]
It is not difficult to show that, if $A$ is an invertible, real
$s\ts {\times}\ts s$ matrix with $\cL=A\ZZ^s$, then
$\cL^\ast=A^{*}\ZZ^s$, where $A^{*} = (A^{-1})^{T}$ is the dual
matrix.\index{dual~matrix} Indeed, if we use the columns of $A$ as the
lattice basis vectors of $\cL$, the columns of $A^{*}$ form the
corresponding dual basis. Therefore, the density of the dual lattice
is given by $\dens(\cL^{*})=1/\dens(\cL)$; see \cite[Ex.~3.1]{BH-TAO1}
for more.

Below, we frequently use various notions from asymptotic analysis as
follows.\footnote{The reader is referred to the \textsc{WikipediA}
  entry on the `big O notation' for further details and references.}
For functions $f,g\colon \RR^s\rar\CC$ and a subset $D\subseteq\RR^s$,
we write
\[
    f(x) \, = \, O \bigl( g(x) \bigr) \qquad \text{for $x\in D$}
\]
if there exists a constant $C>0$ such that
\[
     \lvert f(x) \rvert \, \leqslant \,  C \ts \lvert g(x) \rvert
     \qquad\text{for all}~x\in D\ts .
\]
If the domain $D$ is not specified, it is assumed to be $\RR^s$. We
also use  
\[
   f(x)\,\ll\, g(x)\qquad \text{for $x\in D$}
\]
to mean that $f(x)=O\bigl( g(x) \bigr)$ for $x\in D$. Whenever the
argument is implicitly clear, we will simply write $f\ll g$. Finally,
we use
\[
    f(x) \, = \, o \bigl( g(x) \bigr)
\]
to mean that
\[
   \lim_{|x|\to\infty}
   \frac{f(x)}{g(x)} 
   \, = \, 0 \ts .
\]
Let us now turn to the setting of cut and project sets. For general
background in our present context, we refer to \cite{BH-TAO1}.

\subsection{Regular model sets}\label{BH-sec:RegModSets}

First, we describe the notation which we will use for regular
(Euclidean) model sets. The embedding (or total)
space\index{space!embedding} is
$\RR^k=\RR^d\ts {\times} \ts\ts \RR^{k-d}$, with projections
$\pi\colon \RR^k\rar\RR^d$ onto the first $d$ coordinates (the
physical space),\index{space!physical} and
$\pi^{}_{\mathrm{int}}\colon \RR^k\rar\RR^{k-d}$ onto the last $(k-d)$
coordinates (the internal\index{space!internal} space). We denote the
physical space and internal space by $G$ and $H$,
respectively. Further, suppose that $\cL\subset\RR^k$ is a lattice
with the property that the map $\pi|^{}_{\cL}$ sending points of $\cL$
to $L\defeq \pi(\cL)$ is injective, and that the set
$L^{\star}\defeq\pi^{}_{\mathrm{int}}(\cL)$ is dense in $H$. This
setting is usually summarised in the form of a \emph{cut and project
  scheme}\index{cut~and~project~scheme} (CPS) as follows,
\begin{equation}\label{BH-eq:CPS}
\renewcommand{\arraystretch}{1.2}\begin{array}{r@{}ccccc@{}l}
   & G & \xleftarrow{\,\;\;\pi\;\;\,} & G \times H & 
        \xrightarrow{\;\pi^{}_{\mathrm{int}\;}} & H & \\
   & \cup & & \cup & & \cup & \hspace*{-1ex} 
   \raisebox{1pt}{\text{\footnotesize dense}} \\
   & \pi(\cL) & \xleftarrow{\; 1-1 \;} & \cL & 
   \xrightarrow{\; \hphantom{1-1} \;} & \pi^{}_{\mathrm{int}}(\cL) & \\
   & \| & & & & \| & \\
   & L & \multicolumn{3}{c}{\xrightarrow{\qquad\qquad\;\;\,\star\,
       \;\;\qquad\qquad}} 
       &  {L_{}}^{\star\nts} & \\
\end{array}\renewcommand{\arraystretch}{1}
\end{equation}
where $\star\colon L\rar L^{\star}$, defined by
\begin{equation}\label{BH-eq:stardef}
   x\, \mapsto\, x^{\star} \, \defeq \, \pi^{}_{\mathrm{int}}
   \bigl( \pi^{-1}(x)\cap\cL \bigr) ,
\end{equation}
is the corresponding \emph{star map}\index{star~map} of the CPS; see
\cite{BH-Bob} or \cite[Ch.~7]{BH-TAO1} for background.  A CPS as in
\eqref{BH-eq:CPS} is abbreviated by the triple $(G,H,\cL)$.

\begin{remark}\label{BH-rem:lattice-sum}
  The star map is originally defined on $L$, and can consistently be
  extended to its $\QQ$-span, $\QQ L$, but not to all of $G=\RR^d$.
  However, when viewing the decomposition
  $\RR^k=\RR^d\ts {\times}\ts\ts \RR^{k-d}$, one can always uniquely
  write any element $w\in\RR^k$ as $w=(x,x^{\star})$ with the
  coordinates $x=\pi(w)\in\RR^d$ and
  $x^{\star}=\pi^{}_{\mathrm{int}}(w)\in\RR^{k-d}$. Though this
  notation amounts to a double use of the symbol $\star$, the two
  points of view are consistent whenever $x\in\QQ L$.

  In particular, given the lattice $\cL$ from the CPS
  \eqref{BH-eq:CPS}, there is a canonical bijection (induced by the
  projection $\pi$) between elements $\lambda\in L$ and lattice points
  $(\lambda ,\lambda^{\nts\star})\in\cL$. Consequently, any summation
  over all points of $\cL$ can also be written as a summation over all
  elements of $L$. The corresponding property holds for the dual
  lattice $\cL^{\ast}$ as well.  Below, we will make use of this type
  of bijection repeatedly.  \exend
\end{remark}

At this point, given a set $W\subseteq H$, which is referred to as the
\emph{window}\index{window}, the cut and project set in the CPS
$(G,H,\cL)$ associated to $W$ is the point set 
\begin{equation}\label{BH-eq:oplam}
    \vL \, = \, \oplam(W)\, \defeq \, 
    \{x\in L : x^{\star}\in W \} \ts .
\end{equation}
When the window $W$ is a bounded subset of $H$ with non-empty
interior, $\oplam(W)$ is called a \emph{model set}.\index{model~set} A
model set is called \emph{regular} if $W$ is topologically regular
(meaning that it is the closure of its interior) and if its boundary,
$\partial W$, has Lebesgue measure $0$ in $H=\RR^{k-d}$; see
\cite[Sec.~7.2]{BH-TAO1} for background. Since both $G$ and $H$ are
Euclidean spaces, we call the resulting model sets (as well as the
CPS itself) \emph{Euclidean}.\index{model~set!Euclidean}

In what follows, we need the following simple property of regular
model sets. The result appears in \cite{BH-Bob-old,BH-Bob} and, strictly
speaking, is a statement about the dual CPS. Since our groups $G$ and
$H$ are both self-dual, and since we make no further use of this
duality notion below, we suppress this detail, but include a short
proof for convenience.

\begin{proposition}\label{BH-prop.DualInj}
  If\/ $(G,H,\cL)$ is a CPS as above in \eqref{BH-eq:CPS}, the
  restriction\/ $\pi|^{}_{\text{$\cL^*$}}$ of\/ $\pi$ to the dual
  lattice\/ $\cL^{*}$ is injective.
\end{proposition}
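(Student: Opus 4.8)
The plan is to prove the contrapositive: I will show that if $\pi|^{}_{\cL^{*}}$ fails to be injective, then $L^{\star}=\pi^{}_{\mathrm{int}}(\cL)$ cannot be dense in $H$, contradicting the defining hypothesis of the CPS in \eqref{BH-eq:CPS}. Since $\pi$ is a group homomorphism and $\cL^{*}$ is a group, a failure of injectivity is equivalent to the existence of a nonzero element of $\cL^{*}$ lying in $\ker(\pi)$. Under the decomposition $\RR^k=\RR^d\ts{\times}\ts\RR^{k-d}$ we have $\ker(\pi)=\{0\}\times\RR^{k-d}$, so this would mean there is some $h\in\RR^{k-d}\setminus\{0\}$ with $(0,h)\in\cL^{*}$.

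Next I would exploit the defining property of the dual lattice. For every $x\in\cL$, writing $x=\bigl(\pi(x),\pi^{}_{\mathrm{int}}(x)\bigr)$, membership $(0,h)\in\cL^{*}$ forces
\[
   x\cdot(0,h) \, = \, \pi^{}_{\mathrm{int}}(x)\cdot h \, \in \, \ZZ \ts .
\]
Hence every element of $L^{\star}=\pi^{}_{\mathrm{int}}(\cL)$ lies in the set $S\defeq\{v\in\RR^{k-d}: v\cdot h\in\ZZ\}$, that is, $L^{\star}\subseteq S$.

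The final step is to observe that $S$ is not dense in $H=\RR^{k-d}$. Indeed, the linear functional $v\mapsto v\cdot h$ is continuous and, since $h\neq 0$, surjective onto $\RR$; thus $S$ is the preimage of the closed set $\ZZ\subset\RR$, and its complement is open and nonempty. Consequently $L^{\star}\subseteq S$ cannot be dense in $H$, which is the desired contradiction and completes the argument.

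I do not anticipate a serious obstacle, as the whole proof rests on the single pairing identity above. The only points requiring a little care are the clean identification of $\ker(\pi)$ with the internal space and the bookkeeping for the pairing between $\cL$ and $\cL^{*}$. It is worth noting that only the density of $L^{\star}$ enters, and not the injectivity of $\pi|^{}_{\cL}$; indeed, these two hypotheses are dual to one another, and the present argument is precisely the easy direction of that duality.
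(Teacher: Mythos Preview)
Your proposal is correct and follows essentially the same route as the paper: both arguments show that a nonzero $(0,h)\in\cL^{*}$ would force $\pi^{}_{\mathrm{int}}(x)\cdot h\in\ZZ$ for all $x\in\cL$, and then invoke the density of $L^{\star}$ in $H$ to conclude $h=0$. The only cosmetic difference is packaging: the paper argues directly that the kernel of $\pi|^{}_{\cL^{*}}$ is trivial, while you phrase the same step as a contrapositive and spell out explicitly why the set $S=\{v:v\cdot h\in\ZZ\}$ is closed and proper.
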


\begin{proof}
  The map $\pi|^{}_{\text{$\cL^*$}}\colon \cL^{*}\rar G$ is a
  homomorphism of additive groups. It thus suffices to show that its
  kernel is trivial. Suppose that $y\in\cL^*$ has $\pi(y)=0$, where
  $y=(\pi(y), \pi^{}_{\mathrm{int}}(y))$. Then, for any $x\in\cL$, the
  mutual orthogonality of $G$ and $H$ implies that
\[
   y\ts x\, =\, \pi(y)\ts \pi(x) + 
   \pi^{}_{\mathrm{int}}(y)\ts \pi^{}_{\mathrm{int}}(x) 
   \, = \, \pi^{}_{\mathrm{int}}(y)\ts \pi^{}_{\mathrm{int}}(x)
   \, \in \, \ZZ  \ts .
\]
As $\pi^{}_{\mathrm{int}}(\cL)$ is dense in $H$ by assumption, the
inclusion $y \ts x \in \ZZ$ for all $x\in\cL$ implies
$\pi^{}_{\mathrm{int}}(y)=0$, hence $y=0$, and the map
$\pi|^{}_{\text{$\cL^*$}}$ is injective.
\end{proof}

Next, we will establish the connection with Fourier analysis, where
we need the so-called \emph{Fourier module}\index{Fourier~module}
associated to $(G,H,\cL)$. The latter is the set
$L^{\circledast}=\pi(\cL^{*})\subset G$.

\subsection{Fourier analysis}

Let us recall some basic notions and results from Fourier analysis. In
line with \cite[Ch.~8]{BH-TAO1}, we define the \emph{Fourier
  transform}\index{Fourier~transform} of a complex-valued function
$\phi\in L^1(\RR^s)$ to be the function
$\widehat{\phi}\colon\RR^s\rar\CC$ given by
\[
   \widehat{\phi}(y) \, =
   \int_{\RR^s_{\vphantom{t}}}e(-x\ts y) \, \phi(x ) \dd x \ts ,
\]
which is continuous.  When $t\in\RR^s$ and if
$\alpha^{}_t \ts \phi \colon \RR^s\rar\CC$ denotes the function
defined by
$x \mapsto \bigl(\alpha^{}_t \ts \phi \bigr)(x)=e(-t \ts x) \ts \phi
(x)$, one clearly has $\alpha^{}_t \ts \phi\in L^1(\RR^s)$ and
\[
    \widehat{\alpha^{\rule{0pt}{1.5pt}}_t\ts\phi}\ts (y) \, = \,
    \widehat{\phi}(y+t)\qquad\text{for all}~y\in\RR^s.
\]
Suppose that $u>0$ and consider
$\phi=\bs{1}^{}_{[-\frac{u}{2},\frac{u}{2}]}\colon \RR\rar\RR$. Then,
we find
\[
  \widehat{\phi}(y)\, = \, u\sinc(\pi u \ts y)\ts ,
\]
where $\sinc(x)\defeq \sin(x)/x$ with $\sinc(0)\defeq 1$; compare
\cite[Ex.~8.3]{BH-TAO1}.  Consequently, if $D\subset \RR^s$ is a
rectangular box with faces parallel to coordinate hyperplanes (such a
box will be called \emph{aligned}\index{aligned~box} in what follows),
centred at the origin, and with side lengths $u^{}_1,\ldots ,u^{}_s$,
one has
\begin{equation}\label{BH-eq:FourCoeffs1}
  \widehat{\bs{1}^{}_{\nts D}}(y^{}_1,\ldots ,y^{}_s) \, = \ts
  \prod_{i=1}^{s} u^{}_{i} \sinc(\pi u^{}_{i}\ts y^{}_{i})\ts .
\end{equation}
Since $\lvert \sinc (x)\rvert \leqslant 1$ for $x\in\RR$, it follows
that
\begin{equation}\label{BH-eq:FourCoeffs2}
  \left|\widehat{\bs{1}^{}_{\nts D}}(y^{}_1,\ldots ,y^{}_s)\right|
  \, \leqslant \ts \prod_{i=1}^{s} \min
  \Bigl( u_i,\myfrac{1}{\pi|y_i|} \Bigr) .
\end{equation}

For $\phi\in L^{1}(\RR^s)$, the \emph{inverse}
Fourier\index{Fourier~transform} transform
$\widecheck{\phi}\colon \RR^s\rar\CC$ is defined by
\[
    y\, \mapsto\,  \widecheck{\phi}(y) \, \defeq
    \int_{\RR^s_{\vphantom{t}}} e(x \ts y) \, \phi(x) \dd x \ts .
\]
The Fourier inversion formula states that, if
$\phi,\widehat{\phi}\in L^1(\RR^s)$, one has
\[
    \phi(y) \, = \,
    \widecheck{\widehat{\phi}\,}\! (y)
    \qquad\text{for a.e.}~y\in\RR^s \nts .
\]
If $\phi$ is also continuous, this equality holds for all
$y\in\RR^s$.

Let $\phi^{}_1,\phi^{}_2\in L^1(\RR^s)$.  Then, the
\emph{convolution}\index{convolution} of $\phi^{}_1$ and $\phi^{}_2$
is the function $\phi^{}_1 \nts \ast \phi^{}_2\in L^1(\RR^s)$ defined
by
\[
    \bigl( \phi^{}_1 \nts \ast \phi^{}_2 \bigr) (y) \, =
    \int_{\RR^s_{\vphantom{t}}}\phi^{}_1(x)\,
    \phi^{}_2(y-x) \dd x \ts .
\]
Convolution is commutative, and the \emph{convolution
theorem}\index{convolution~theorem} states that
\begin{equation}\label{BH-eq:ConvFourTrans}
  \widehat{\phi^{\rule{0pt}{2pt}}_1\!\ast\nts\nts 
  \phi^{}_2}\ts (y) \, = \,
  \widehat{\phi^{}_1}(y)\,\widehat{\phi^{}_2}(y)
  \qquad\text{for all}~y\in\RR^s .
\end{equation}

Finally, we will need the following version of the \emph{Poisson
  summation formula} (PSF),\index{Poisson~summation~formula} which is
a variant of the form proved in \cite[Cor.~VII.2.6]{BH-SteiWeis}; see
\cite[Sec.~9.2]{BH-TAO1} for background.

\begin{proposition}\label{BH-prop.PSF}
  Let\/ $\cL\subset\RR^s$ be a lattice, with dual lattice\/
  $\cL^*$. Suppose that\/ $\phi$ is a continuous function with compact
  support and that
\begin{equation}\label{BH-eq:PSF2}
  \sum_{\xi\in\cL^\ast_{\vphantom{t}}} \bigl| \widehat{\phi}(\xi) \bigr|
  \, < \, \infty \ts .
\end{equation}
Then, for all\/ $y\in\RR^s$, we have the identity
\begin{equation*} 
	\sum_{\ell\in\cL}\phi(y+\ell) \, = \, 
	\dens (\cL) \!\sum_{\xi\in\cL^*_{\vphantom{t}}}\!
	\widehat{\phi}(\xi) \, e(\xi\ts y)\ts .
\end{equation*}
\end{proposition}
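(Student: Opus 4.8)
The plan is to prove this by the standard periodization argument, adapted to the general lattice $\cL$. First I would introduce the periodization
\[
  F(y) \, \defeq \sum_{\ell\in\cL}\phi(y+\ell)\ts ,
\]
and observe that, since $\phi$ has compact support, only finitely many translates $\phi(\,\cdot\,+\ell)$ are nonzero on any fixed bounded set. Hence the defining sum is locally finite, so $F$ is well defined, continuous, and manifestly $\cL$-periodic. The claim then reduces to computing the Fourier series of $F$ on the torus $\RR^s/\cL$ and showing that it converges back to $F$ pointwise.

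Next I would identify the characters of $\RR^s/\cL$ as the functions $y\mapsto e(\xi\ts y)$ with $\xi\in\cL^{*}$, and compute the associated Fourier coefficients. Fixing a relatively compact fundamental domain $\Omega$ of $\cL$, whose volume equals $1/\dens(\cL)$, the coefficient attached to $\xi$ is
\[
  c^{}_{\xi} \, = \, \dens(\cL)\int_{\Omega}F(y)\,e(-\xi\ts y)\dd y
  \, = \, \dens(\cL)\sum_{\ell\in\cL}\int_{\Omega}\phi(y+\ell)\,e(-\xi\ts y)\dd y\ts ,
\]
where the interchange of sum and integral is trivial because, for $y$ ranging over the bounded set $\Omega$, only finitely many $\ell$ contribute. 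In the $\ell$-th summand I would substitute $z=y+\ell$: since $\xi\in\cL^{*}$ and $\ell\in\cL$ force $\xi\ts\ell\in\ZZ$, one has $e(-\xi\ts y)=e(-\xi\ts z)$, while the translates $\Omega+\ell$ tile $\RR^s$ as $\ell$ runs over $\cL$. The family of integrals therefore reassembles into a single integral over $\RR^s$, yielding $c^{}_{\xi}=\dens(\cL)\,\widehat{\phi}(\xi)$, which is exactly the coefficient appearing on the right-hand side of the asserted identity.

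The hard part will be passing from the Fourier coefficients back to the function, since a continuous periodic function need not equal the sum of its Fourier series pointwise. Here the hypothesis \eqref{BH-eq:PSF2} is precisely what rescues the argument. Because $\sum_{\xi\in\cL^{*}}|\widehat{\phi}(\xi)|<\infty$ and $|e(\xi\ts y)|=1$, the series $\dens(\cL)\sum_{\xi}\widehat{\phi}(\xi)\,e(\xi\ts y)$ is dominated termwise by a convergent series of constants, so by the Weierstrass test it converges absolutely and uniformly to a continuous $\cL$-periodic function $G$. By construction $G$ has the same Fourier coefficients $c^{}_{\xi}$ as $F$, whence $F-G$ is a continuous periodic function all of whose Fourier coefficients vanish; the uniqueness theorem for Fourier coefficients on the torus then forces $F\equiv G$, giving the identity for every $y\in\RR^s$.

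If one prefers to avoid the multidimensional torus, the same conclusion follows by the linear change of variables $\cL=A\ZZ^s$, $\cL^{*}=A^{*}\ZZ^s$: setting $\psi=\phi\circ A$ turns the left-hand side into $\sum_{n\in\ZZ^s}\psi(A^{-1}y+n)$ and, via $\widehat{\psi}(\eta)=\dens(\cL)\,\widehat{\phi}(A^{*}\eta)$, reduces the statement to the classical Poisson summation formula for $\ZZ^s$, with the summability condition transferring along the way.
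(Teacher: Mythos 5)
Your proof is correct, but your main argument takes a genuinely different route from the paper's. The paper does precisely what your closing paragraph sketches as an alternative: it fixes $A\in\GL(s,\RR)$ with $\cL=A\ts\ZZ^s$, sets $f^{}_{\!A}=\phi\circ A$, computes $\widehat{f^{}_{\!A}}(n)=\dens(\cL)\,\widehat{\phi}(A^{*}n)$, and then invokes the \emph{proof} of the classical Poisson summation formula for $\ZZ^s$ from Stein--Weiss [Cor.~VII.2.6], explicitly acknowledging that the corollary as stated carries slightly different hypotheses, so that it is the argument rather than the statement that is being reused. Your central argument instead inlines that analytic core: you periodize over $\cL$ directly, compute the Fourier coefficients of $F$ on $\RR^s/\cL$ by unfolding over a fundamental domain (the interchange being trivially justified by compact support and the relative compactness of $\Omega$, and the tiling by translates $\Omega+\ell$ holding up to null sets, which is harmless for the integrals), and then use hypothesis \eqref{BH-eq:PSF2} together with the Weierstrass test to produce a continuous $\cL$-periodic function $G$ with the same coefficients, concluding $F\equiv G$ from the uniqueness theorem for Fourier coefficients of continuous functions on the torus. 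What your route buys is self-containedness and a transparent accounting of exactly where \eqref{BH-eq:PSF2} enters---it substitutes for the pointwise decay assumptions of Stein--Weiss---at the modest cost of invoking the (standard) uniqueness theorem, which you should at least cite or justify, e.g.\ via Stone--Weierstrass or Fej\'er-type approximation on $\RR^s/\cL$. The paper's route is shorter on the page but asks the reader to check that the Stein--Weiss argument survives the change of hypotheses; note that your final change-of-variables identity $\widehat{\psi}(\eta)=\dens(\cL)\,\widehat{\phi}(A^{*}\eta)$ is exactly the paper's key computation, so the two proofs coincide in all respects except where the torus-side Fourier analysis is carried out.
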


\begin{proof}
  Let $A \in \GL (s,\RR)$ be a fixed matrix such that $\cL=A\ts\ZZ^s$,
  and let $f^{}_{\! A} \colon \RR^s \rar \RR^s$ be defined by
  $f^{}_{\! A}(y)=\phi(Ay)$. The proof of
  \cite[Cor.~VII.2.6]{BH-SteiWeis} (which relies on slightly different
  hypotheses) guarantees that, for all $y\in\RR^s$,
\begin{equation}\label{BH-eq:PSF1}
  \sum_{n\in\ZZ^s}f^{}_{\! A} (y+n) \, =
  \sum_{n\in\ZZ^s}\widehat{f^{}_{\! A}}(n)\, e(n\ts y) \ts .
\end{equation}
Relevant to this, and to what follows, is the observation that
\begin{align*}
  \widehat{f^{}_{\! A}}(n) \,
  & = \int_{\RR^s}e(-x\ts n) \, \phi(Ax) \dd x 
  \, = \, \left| \det \left(A^{-1}\right)\right|
    \int_{\RR^s}e \bigl( - (A^{-1}t )n \bigr)
    \ts \phi(t) \dd t\\[1mm]
  & = \, \dens (\cL) \int_{\RR^s}
    e \bigl( -t \ts (A^{*}n ) \bigr) \ts \phi(t) \dd t
    \, = \, \dens (\cL)\, \widehat{\phi} (A^{*}n ) \ts ,
\end{align*}
where $A^* = (A^{-1})^T$ is the dual matrix introduced earlier.  Now,
substituting $y'=Ay$ in \eqref{BH-eq:PSF1} we get that, for all
$y'\in\RR^s$,
\begin{align*}
  \sum_{\ell\in\cL}\phi(y'+\ell) \,
  & = \, \dens (\cL)\sum_{n\in\ZZ^s}\widehat{\phi}
    (A^{*}n ) \, e \left(n \ts (A^{-1}y' ) \right) \\[1mm]
  & = \, \dens (\cL) \sum_{n\in\ZZ^s}\widehat{\phi} (A^{*}n)
    \, e \bigl( (A^{*}n ) \ts y' \ts \bigr)  
  \, = \, \dens (\cL) \!\sum_{\xi \in\cL^*_{\vphantom{t}}}\!
  \widehat{\phi}(\xi) \, e(\xi\ts y') \ts . \qedhere
\end{align*}
\end{proof}

We are now ready to approach our general result.

\section{Setup for the 
proof of Theorem \protect\ref{BH-THM.CONVERGENCE1}}

In this section, we develop the framework and notation in which
Theorem~\ref{BH-THM.CONVERGENCE1} will be proved. In
Section~\ref{BH-sec:special-case}, we will focus on establishing the
bulk of the estimates needed for the proof in the special case when
$W$ is an aligned cube. Then, in Section~\ref{BH-sec:general-case}, we
will use a covering argument to complete the proof for more general
windows.

Suppose that $\vL$ is a regular Euclidean model set, as described
above. Beginning from the definition in \eqref{BH-eq:FourBohrR}, we
make the simple observation that
\begin{equation}\label{BH-eq:IndicFnSum}
    \vol(B^{}_{\nts R})\,  a^{}_{\nts R}(t)\, 
     =\sum_{\lambda\in\vL^{}_{\nts R}}\! e(-t\lambda)
                   \, =\sum_{\lambda\in L}
                     e(-t\lambda)\,\bs{1}^{}_{B^{}_{\nts R}}(\lambda)
                     \,\bs{1}^{}_{W}(\lambda^{\nts\star}) \ts ,
\end{equation}
where summing over $\lambda\in L$ is the same as summing over
$(\lambda,\lambda^{\nts\star})\in\cL$, by
Remark~\ref{BH-rem:lattice-sum}.  We would like to apply the PSF to
this sum, but the summand does not satisfy the continuity or Fourier
transform properties required. Therefore, we replace the last sum by
\begin{equation}\label{BH-eq:F_RSum}
    \sum_{\lambda\in L}
    e(-t\lambda)\, F^{}_{\nts R}(\lambda,\lambda^{\nts\star}) \ts ,
\end{equation}
where the `mollified' function $F^{}_{\nts R}$ is defined by
\begin{equation}\label{BH-eq:F_RDef}
  F^{}_{\nts R}(\lambda,\lambda^{\nts\star}) \, = \,
  \frac{  \bigl(\bs{1}^{}_{B^{}_{\nts\nts R}}\!\ast
  \bs{1}^{}_{B^{}_{\nts\nts S_{\nts\nts R}}}\bigr)
  (\lambda)\cdot\bigl(\bs{1}^{}_{W}\ast
  \bs{1}^{}_{\nts B_{\epsilon^{}_{\nts\nts R}}}\bigr)
  (\text{$\lambda^{\nts\star}$})}
{2^k \, S^{d}_{\nts\nts R}\:\epsilon_{\nts\nts R}^{k-d}}\ts ,
\end{equation}
with $\{S^{}_{\nts R}\}$ and $\{\epsilon^{}_{\nts R}\}$ being
monotonic collections of positive real numbers chosen so that
\begin{equation}\label{BH-eq:S^{}_{R}Eps_RConds}
  S^{}_{\nts R} \, \to \, \infty,\quad S^{}_{\nts R}
  \, = \, o(R) \ts ,  \quad\text{and}\quad 
  \epsilon^{}_{\nts R} \, = \, o(1) \ts ,
  \quad \text{as}\quad R\to\infty \ts .
\end{equation}
In Proposition~\ref{BH-prop.F_RApprox}, we shall show that this does
not affect our final estimates. First, we establish the following
lemma, which is a consequence of a well-known uniform distribution
result for model sets; see \cite[Thm.~7.2]{BH-TAO1} and
\cite{BH-Bob02} for details and a general statement.

\begin{lemma}\label{BH-lem.NestedWindow}
  Suppose that\/ $(G,H,\cL)$ is a Euclidean CPS, as defined in
  Section~\textnormal{\ref{BH-sec:RegModSets}}. Let\/
  $(A_i)^{}_{i\in\NN}$ be a sequence of relatively compact, measurable
  subsets of\/ $H$, with non-empty interiors, boundaries of measure\/
  $0$, and with
\[
    \lim_{i\to\infty} \mathrm{vol}(A_i) \, = \, 0 \ts .
\]
Further, suppose that the sequence is nested, so\/
$A_{i+1}\subseteq A_i$ for all\/ $i\geqslant 1$. Then, there exists a
sequence\/ $ (R_i )^{}_{i\in\NN}$ of real numbers with the property
that, for all\/ $i\geqslant 1$ and all\/ $R\geqslant R_i$,
\[
   \card \bigl\{\lambda\in L :\lvert \lambda\rvert \le
   R~\text{and}~\lambda^{\nts\star}\nts\in A_i\bigr\}
   \, \leqslant \: 2^{d+1}\dens (\cL)\vol (A_i)R^d.
\]
Further, if\/ $(T_j )^{}_{j\in\NN}$ is any sequence of real numbers
tending to $\infty$, one has
\[
    \card \bigl\{\lambda\in L :\lvert \lambda\rvert \le
    T_j \text{ and } \lambda^{\nts\star}\nts\in A_j\bigr\}
    \, = \: o\bigl(
    T^{d}_j \bigr)\quad\text{as}\quad j\to\infty \ts .
\]
\end{lemma}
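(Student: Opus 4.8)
The plan is to derive both assertions from the uniform distribution theorem for model sets cited in the statement, namely the fact that, for any relatively compact $A\subseteq H$ with non-empty interior and $\vol(\partial A)=0$, the counting function satisfies
\[
  \lim_{R\to\infty}\frac{\card\{\lambda\in L:\lvert\lambda\rvert\leqslant R~\text{and}~\lambda^{\nts\star}\in A\}}{(2R)^d}\,=\,\dens(\cL)\,\vol(A)\ts.
\]
Each $A_i$ meets these hypotheses, so I would apply this verbatim with $A=A_i$. Since the limit is the positive number $\dens(\cL)\vol(A_i)$ and $\vol(B^{}_{\nts R})=(2R)^d$, there is a threshold $R_i$ beyond which the quotient stays below twice its limiting value; that is, for all $R\geqslant R_i$,
\[
  \card\{\lambda\in L:\lvert\lambda\rvert\leqslant R,\ \lambda^{\nts\star}\in A_i\}\,\leqslant\,2\ts\dens(\cL)\vol(A_i)\,(2R)^d\,=\,2^{d+1}\dens(\cL)\vol(A_i)R^d\ts.
\]
This is exactly the first assertion, the extra factor of $2$ being the slack that converts the asymptotic equality into a bound valid for all large $R$. (One may arrange the $R_i$ to be increasing, but this is not needed.)

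For the second assertion, I would decouple the two competing limits --- the shrinking window index and the growing radius --- by freezing the window at a fixed index. Fix $\epsilon>0$. Because $\vol(A_m)\to0$, I would choose $m$ with $2^{d+1}\dens(\cL)\vol(A_m)<\epsilon$. For every $j\geqslant m$, nesting gives $A_j\subseteq A_m$, hence
\[
  \{\lambda\in L:\lvert\lambda\rvert\leqslant T_j,\ \lambda^{\nts\star}\in A_j\}\,\subseteq\,\{\lambda\in L:\lvert\lambda\rvert\leqslant T_j,\ \lambda^{\nts\star}\in A_m\}\ts.
\]
Since $T_j\to\infty$, we also have $T_j\geqslant R_m$ for all sufficiently large $j$, so the first assertion applied with $i=m$ and $R=T_j$ bounds the count on the right by $2^{d+1}\dens(\cL)\vol(A_m)\,T_j^d$. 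Combining these, for all large $j$,
\[
  \frac{\card\{\lambda\in L:\lvert\lambda\rvert\leqslant T_j,\ \lambda^{\nts\star}\in A_j\}}{T_j^d}\,\leqslant\,2^{d+1}\dens(\cL)\vol(A_m)\,<\,\epsilon\ts,
\]
and since $\epsilon>0$ was arbitrary, the quotient tends to $0$, which is the assertion that the count is $o(T_j^d)$.

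The only genuine input is the uniform distribution theorem, which we are free to invoke; everything else is bookkeeping. The point demanding care is the \emph{diagonal} nature of the second statement, where the window $A_j$ and the cutoff $T_j$ move simultaneously: a naive attempt to apply uniform distribution to the moving window $A_j$ would require control uniform in the window, which the theorem does not supply. The resolution above sidesteps this --- monotonicity of $(A_i)$ lets one replace the moving window $A_j$ by a \emph{fixed} window $A_m$ of arbitrarily small volume, after which only the single-window radius asymptotics is used. I expect that checking each $A_i$ satisfies the hypotheses of the uniform distribution theorem (relative compactness and null boundary) is the only place where the stated assumptions on the $A_i$ are actually consumed.
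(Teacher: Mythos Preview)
Your proof is correct and follows essentially the same route as the paper: both parts rest on the uniform distribution theorem for model sets, the first claim via the fact that a convergent quotient is eventually bounded by twice its (positive) limit, and the second via freezing the window at a fixed index $m$ (the paper's $i$) and using nestedness $A_j\subseteq A_m$ together with $T_j\geqslant R_m$ for large $j$. The paper phrases the ``extra factor of $2$'' as a triangle-inequality step and the second part as a limit in $i$ rather than an $\epsilon$-argument, but these are cosmetic differences.
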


\begin{proof}
It is shown in \cite{BH-Schlottmann1998} that
\[
  \lim_{R\to\infty}\frac{\card \bigl\{\lambda\in L :
    \lvert \lambda\rvert \le
    R~\text{and}~\lambda^{\nts\star}\nts\in
    A_i\bigr\}}{(2R)^d} \, = \, \dens(\cL)\,
  \mathrm{vol}(A_i) \ts ,
\]
which, together with an obvious step involving the triangle inequality
that explains the extra factor of $2$, implies the first claim of the
lemma.

Next, let $ ( R_i )^{}_{i\in\NN}$ be a sequence of real numbers
satisfying the first claim. For each $i$, choose $j_i\geqslant i$
large enough so that $T_j\geqslant R_i$, for all $j\geqslant
j_i$. Then, using the fact that $A_j\subseteq A_i$ for all
$j\geqslant i,$ it follows that, for $j\geqslant j_i$,
\begin{align*}
   \card \bigl\{\lambda\in L :\lvert \lambda\rvert \le
  T^{}_j~\text{and}~\lambda^{\nts\star}\nts\in A_j\bigr\} \,
  & \leqslant \, \card \bigl\{\lambda\in L :\lvert \lambda\rvert \le
   T^{}_j~\text{and}~\lambda^{\nts\star}\nts\in A_i\bigr\} \\[1mm]
  & \leqslant \, 2^{d+1}\dens(\cL ) \, \mathrm{vol}(A_i) \, T_j^{d}.
\end{align*}
Taking the limit as $i\to\infty$ completes the argument.
\end{proof}

One can now connect the FB coefficients of the model set $\vL$ and of
its counterpart with the mollified strip and window as follows.

\begin{proposition}\label{BH-prop.F_RApprox}
  If\/ $\nts\vL$ is a regular model set, if\/ $\nts F^{}_{\nts R}$ is
  defined as in\/ \eqref{BH-eq:F_RDef}, and if\/ $\{S^{}_{\nts R}\}$
  and\/ $\{\epsilon^{}_{\nts R}\}$ satisfy\/
  \eqref{BH-eq:S^{}_{R}Eps_RConds}, one has
\begin{align*}
  \vol(B^{}_{\nts R})\, a^{}_{\nts R}(t) \, &=\sum_{\lambda\in L}
  e(-t\lambda)\,\bs{1}^{}_{B^{}_{\nts R}}(\lambda)
  \,\bs{1}^{}_{W}(\lambda^{\nts\star})\\[1mm]
  &=
  \sum_{\lambda\in L} e(-t\lambda)\, 
    F^{}_{\nts R}(\lambda,\lambda^{\nts\star})\; + \, o(R^d)
    \qquad \text{as } R\to\infty \ts .
\end{align*}
\end{proposition}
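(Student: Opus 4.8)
The first displayed identity is just a restatement of \eqref{BH-eq:IndicFnSum}, so the plan is to prove the second, namely that the two sums differ by $o(R^d)$. Since $\lvert e(-t\lambda)\rvert=1$, the triangle inequality reduces this to showing
\[
   \sum_{\lambda\in L} \bigl\lvert \bs{1}^{}_{B^{}_{\nts R}}(\lambda)\,
   \bs{1}^{}_{W}(\lambda^{\nts\star}) -
   F^{}_{\nts R}(\lambda,\lambda^{\nts\star}) \bigr\rvert \, = \, o(R^d)
   \qquad\text{as } R\to\infty \ts .
\]
First I would record that the denominator in \eqref{BH-eq:F_RDef} equals $\vol(B^{}_{\nts S^{}_{\nts R}})\ts\vol(B_{\epsilon^{}_{\nts R}})$, so that $F^{}_{\nts R}=p^{}_{\nts R}\cdot q^{}_{\nts R}$ with
\[
   p^{}_{\nts R}(\lambda) \defeq \frac{\bigl(\bs{1}^{}_{B^{}_{\nts R}}
   \ast \bs{1}^{}_{B^{}_{\nts S^{}_{\nts R}}}\bigr)(\lambda)}
   {\vol(B^{}_{\nts S^{}_{\nts R}})}
   \quad\text{and}\quad
   q^{}_{\nts R}(\lambda^{\nts\star}) \defeq
   \frac{\bigl(\bs{1}^{}_{W}\ast \bs{1}^{}_{B_{\epsilon^{}_{\nts R}}}
   \bigr)(\lambda^{\nts\star})}{\vol(B_{\epsilon^{}_{\nts R}})} \ts .
\]
Both factors are mollifications taking values in $[0,1]$: using the sup-norm one checks that $p^{}_{\nts R}$ equals $1$ on $B^{}_{\nts R-S_{\nts R}}$ and vanishes off $B^{}_{\nts R+S_{\nts R}}$, while $q^{}_{\nts R}$ coincides with $\bs{1}^{}_{W}$ except on the $\epsilon^{}_{\nts R}$-neighbourhood of $\partial W$.

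The key algebraic step is the telescoping
\[
   \bs{1}^{}_{B^{}_{\nts R}}\bs{1}^{}_{W} - p^{}_{\nts R}q^{}_{\nts R}
   \, = \, \bs{1}^{}_{W}\bigl(\bs{1}^{}_{B^{}_{\nts R}} -
   p^{}_{\nts R}\bigr) + p^{}_{\nts R}\bigl(\bs{1}^{}_{W} -
   q^{}_{\nts R}\bigr) \ts ,
\]
which I would estimate term by term. For the first term, $\bigl\lvert\bs{1}^{}_{B^{}_{\nts R}}-p^{}_{\nts R}\bigr\rvert\leqslant 1$ and vanishes outside the shell $R-S^{}_{\nts R}<\lvert\lambda\rvert\leqslant R+S^{}_{\nts R}$, so its contribution is at most the number of points $\lambda\in\vL$ in that shell. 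By the uniform distribution result of \cite{BH-Schlottmann1998} applied at the radii $R\pm S^{}_{\nts R}$, this count is $\dens(\cL)\vol(W)\bigl[(2(R+S^{}_{\nts R}))^d-(2(R-S^{}_{\nts R}))^d\bigr]+o(R^d)=O(R^{d-1}S^{}_{\nts R})+o(R^d)$, which is $o(R^d)$ because $S^{}_{\nts R}=o(R)$ by \eqref{BH-eq:S^{}_{R}Eps_RConds}.

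The second term is where the real work lies, and I expect it to be the main obstacle. Here $p^{}_{\nts R}\leqslant\bs{1}^{}_{B^{}_{\nts R+S_{\nts R}}}$ and $\bigl\lvert\bs{1}^{}_{W}-q^{}_{\nts R}\bigr\rvert\leqslant 1$ vanishes unless $\lambda^{\nts\star}$ lies in $(\partial W)_{\epsilon^{}_{\nts R}}\defeq\{y\in H:\dist(y,\partial W)\leqslant\epsilon^{}_{\nts R}\}$, so the contribution is bounded by $\card\{\lambda\in L:\lvert\lambda\rvert\leqslant R+S^{}_{\nts R}\text{ and }\lambda^{\nts\star}\in(\partial W)_{\epsilon^{}_{\nts R}}\}$. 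Since $\vL$ is regular, $\partial W$ has Lebesgue measure $0$, whence $\vol\bigl((\partial W)_{\epsilon}\bigr)\to 0$ as $\epsilon\to 0$ by continuity of measure from above; choosing the $\epsilon^{}_{\nts R}$ within the co-countable set of values for which the level set $\{\dist(\cdot,\partial W)=\epsilon\}$ is null, the nested family $(\partial W)_{\epsilon^{}_{\nts R}}$ meets the hypotheses of Lemma~\ref{BH-lem.NestedWindow}. To convert this into the required decay I would argue by contradiction: if the term were not $o(R^d)$, there would be $\delta>0$ and an increasing sequence $R_j\to\infty$ with the count at least $\delta R_j^d$; but then the sets $A_j=(\partial W)_{\epsilon^{}_{\nts R_j}}$ (nested because $\epsilon^{}_{\nts R}$ is monotone and $R_j$ increasing) together with $T_j=R_j+S^{}_{\nts R_j}\to\infty$ feed into the second assertion of Lemma~\ref{BH-lem.NestedWindow}, forcing the count to be $o(T_j^d)=o(R_j^d)$, a contradiction. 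Adding the two $o(R^d)$ bounds then yields the proposition. The crux throughout is this internal boundary term, where the measure-zero boundary hypothesis and the balanced counting estimate of Lemma~\ref{BH-lem.NestedWindow} together defeat the polynomial growth of the averaging ball.
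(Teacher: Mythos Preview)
Your argument is correct and follows essentially the same route as the paper's own proof: the same telescoping into a physical shell term (handled via uniform distribution and $S^{}_{\nts R}=o(R)$) and an internal boundary term (handled via Lemma~\ref{BH-lem.NestedWindow}). Your extra remark about selecting $\epsilon^{}_{\nts R}$ so that the level sets $\{\dist(\cdot,\partial W)=\epsilon\}$ are null is a technical point the paper glosses over when invoking the lemma; note, though, that since $\{\epsilon^{}_{\nts R}\}$ is given rather than free, the clean way to phrase this is to enlarge each $\epsilon^{}_{\nts R}$ slightly to an admissible value and use monotonicity of the neighbourhoods.
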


\begin{proof}
  The first equality in the conclusion of the proposition has already
  been established; see Eq.~\eqref{BH-eq:IndicFnSum}.  For $R>0$, let
  the functions $\varphi^{}_{\nts R}\colon G\rar\RR$ and
  $\psi^{}_{\nts R}\colon H\rar\RR$ be defined by
\begin{equation}\label{BH-eq:Phi_RPsi_RDef}
  \varphi^{\phantom{\chi}}_{\nts R}(x)\, = \, 
  \frac{\bigl( \bs{1}^{}_{B^{}_{\nts R}}\ast
    \bs{1}^{}_{B^{}_{\nts S_{\nts\nts R}}}\bigr)
    (x)}{2^d \, S^{d}_{\nts\nts R}}
  \quad\text{and}\quad\psi^{\phantom{\chi}}_{\nts R}(y) \, = \,
  \frac{\bigl(\bs{1}^{}_{W} \ast \bs{1}^{}_{\epsilon^{}_{\nts R}}\bigr)
  (y)}{2^{k-d}\, \epsilon_{\nts R}^{k-d}}\ts .
\end{equation}
For each $R$, the function
\[
    \varphi^{\phantom{\chi}}_{\nts R}-\bs{1}^{}_{B^{}_{\nts R}}
\]
is supported in the $S^{}_{\nts\nts R}\ts$-neighbourhood of the
boundary of $B^{}_{\nts R}$ in $G$ (physical space), which we denote
by $\partial(B^{}_{\nts R},S^{}_{\nts R})$. Similarly, the function
\[
     \psi^{\phantom{\chi}}_{\nts R} - \bs{1}^{}_{W}
\]
is supported in the $\epsilon^{}_{\nts R}$-neighbourhood of the
boundary of $W$ in $H$ (internal space), which we call
$\partial(W,\epsilon^{}_{\nts R})$.

The monotonicity of $\{\epsilon^{}_{\nts R}\}$ guarantees that, for any
sequence of numbers $R_1<R_2<\cdots$ tending to infinity, the collection
$\bigl\{\partial(W,\epsilon^{}_{\nts R_i})\bigr\}_{i\in\NN}$ is a nested
sequence of sets, and the assumption  $\vol(\partial W) =0$
implies that
\[
    \lim_{i\to\infty}\vol\bigl(\partial(W,\epsilon^{}_{\nts R_i})\bigr)
    \,= \, 0 \ts .
\]
Applying Lemma~\ref{BH-lem.NestedWindow}, we have
\[
  \card \bigl\{\lambda\in L :\lvert \lambda\rvert \le
  R^{}_i+S^{}_{\nts R_i}~\text{and}~\lambda^{\nts\star}\nts\in
  \partial(W,\epsilon^{}_{\nts R_i}) \bigr\}
  \, = \: o\bigl((R^{}_i+S^{}_{\nts R_i})^d\bigr)
\]
as $i\to\infty$. Consequently, as $R \to \infty$, we get
\begin{equation}\label{BH-eq:Eps_RIneq}
  \sum_{\lambda\in L}
  e(-t\lambda)\,\varphi^{\vphantom{\chi}}_{\nts R}(\lambda)\,
  \bs{1}^{}_{W}(\lambda^{\nts\star}) \, =
  \sum_{\lambda\in L} e(-t\lambda)\,
  F^{}_{\nts R}(\lambda,\lambda^{\nts\star})
  \; +\, o(R^d) \ts .
\end{equation}
Further, by the uniform distribution arguments outlined in the proof 
of \cite[Thm.~7.2]{BH-TAO1}, see also \cite{BH-Bob02}, we get
\[
  \card \bigl\{\lambda\in L :
  \lambda\in\partial(B^{}_{\nts R},S^{}_{\nts R} )~\text{and}~
  \lambda^{\nts\star}\nts\in W \bigr\}
  \, = \, O(R^{d-1}S_{\nts R}) \ts ,
\]
which is a measure of the thickened boundary.  Using our condition
$S^{}_{\nts R}=o(R)$, this implies
\begin{equation}\label{BH-eq:S^{}_{R}Ineq}
  \vol( B^{}_{\nts R}) \, a^{}_{\nts R}(t)\, =
  \sum_{\lambda\in L}
  e(-t\lambda)\,\varphi^{\vphantom{\chi}}_{\nts R}(\lambda)\,
  \bs{1}^{}_{W}(\lambda^{\nts\star})\; +\, o(R^d) \ts .
\end{equation}
Combining Eqs.~\eqref{BH-eq:Eps_RIneq} and
\eqref{BH-eq:S^{}_{R}Ineq} produces the desired conclusion.
\end{proof}

Returning to our main line of thought, we wish to apply the PSF to
the sum in Eq.~\eqref{BH-eq:F_RSum}. For $t\in G$, define
$\phi_t \colon G \times H \rar \CC$ by
\[
     \phi_t(x) \, = \, e(-t \ts \pi(x)) F_R(x ) \ts .
\]
This function is continuous with compact support, but we cannot, in
general, guarantee condition \eqref{BH-eq:PSF2} to be
satisfied. However, if we assume that $W$ is an \emph{aligned} cube in
$H$ with side length $\eta$, then, using Eqs.~\eqref{BH-eq:FourCoeffs1}
and \eqref{BH-eq:ConvFourTrans} and the notation of Eq.
\eqref{BH-eq:Phi_RPsi_RDef}, we have for
$(\theta,\theta^{\star})\in\cL^{*}$ that
\begin{equation}\label{BH-eq:FourCoeffProdEst}
\begin{split}
    \bigl|\widehat{\varphi^{\rule{0pt}{1.5pt}}_{\nts R}}
    (\theta+t)\bigr| \, \bigl|\widehat{\psi^{}_{\nts R}}
    (\theta^{\star}) \bigr| \, \leqslant \,
    & \prod_{i=1}^d \min\nts
     \left(2R\ts ,\myfrac{1}{| \theta_i+t_i| }\right)\,
     \min\nts\left(1,\myfrac{1}{S^{}_{\nts R}\,|
         \theta_i+t_i| }\right) \\[1mm]
   &\cdot\prod_{i=1}^{k-d}
	\min\nts\left(\eta,\myfrac{1}{| \theta_i^{\star}| }\right)\,
	\min\nts\left(1,\myfrac{1}{\epsilon^{}_{\nts R}\,
		| \theta_i^{\star}|}\right).
\end{split}
\end{equation}
Using this estimate, we see that
\[
  \sum_{\xi \in\cL^\ast_{\vphantom{t}}}
  \bigl| \widehat{\phi^{}_t}(\xi ) \bigr|
  \, =  \sum_{(\theta,\theta^{\star})\in\cL^{*}}
  \bigl| \widehat{\varphi^{\rule{0pt}{1.5pt}}_{\nts\nts R}}
  (\theta+t) \bigr| \,	\bigl| \widehat{\psi^{}_{\nts R}}
  (\theta^{\star} ) \bigr|  \, < \, \infty ,  \ts 
\]
by comparing the sum with the corresponding multiple integral. We omit
the details of this argument, as we will provide more precise bounds
for sums of this form in the next section. Since condition
\eqref{BH-eq:PSF2} is satisfied for this special choice of $W$ and,
applying Proposition~\ref{BH-prop.PSF} to $\phi^{}_t$ with
$y=0$,\index{Poisson~summation~formula} we get
\begin{equation}\label{BH-eq:PSFforAligned}
  \sum_{\lambda\in L} e(-t\lambda)\,
  F^{}_{\nts R}(\lambda,\lambda^{\nts\star})
  \, = \, \dens (\cL) \sum_{\substack{\theta\in L^{\nts\circledast}}}
  \widehat{\varphi^{\rule{0pt}{1.5pt}}_{\nts\nts R}}(\theta+t)\, 
  \widehat{\psi^{}_{\nts R}}(\theta^{\star} ) \ts .
\end{equation}
Note that summing over all $\theta\in L^{\circledast}=\pi (\cL^{*})$
is the same as summing over all lattice points
$(\theta,\theta^{\star})\in\cL^{*}$, due to
Remark~\ref{BH-rem:lattice-sum} and
Proposition~\ref{BH-prop.DualInj}. At this point, there are two
possibilities to consider. First, if $t\in L^{\circledast}$, the
contribution to the sum from $\theta=-t$ is
\begin{equation}\label{BH-eq:F_RSumMainTerms}
     \dens (\cL) \vol(B^{}_{\nts R}) \,
     \widehat{\psi^{}_{\nts R}}(-t^{\star})
     \, = \, \dens (\cL)  \vol( B^{}_{\nts R})\,
     \widehat{\bs{1}^{}_{W}}(-t^{\star})\,
     \bigl( 1+o(1) \bigr)
\end{equation}
as $R\to\infty$. This gives the main terms, which determine the
support of the diffraction measure. For the other terms, we will rely
on the upper bound from Eq.~\eqref{BH-eq:FourCoeffProdEst}.

Much of the remainder of the proof of
Theorem~\ref{BH-THM.CONVERGENCE1} is now centred around estimating
sums of products of the form given on the right-hand side of
Eq.~\eqref{BH-eq:FourCoeffProdEst}, in order to establish the result
we seek in the special case when $W$ is an aligned cube. Once we have
accomplished this, we will employ a covering argument to deal with the
case of more general windows.

\section{Proof of 
Theorem~{\protect\ref{BH-THM.CONVERGENCE1}}:  
Aligned cubes}\label{BH-sec:special-case}

Throughout this section, we will assume that $W$ is an aligned cube of
side length $\eta$ in $H$. Using the notation of the previous section,
for each $s\in\RR^d$, we define a set $\cA_s\subset \RR^k$ by
\[
  \cA_s \, = \, \bigl\{(x,y)\in\RR^d{\times}\ts \RR^{k-d} :
   ~ 0<\lvert x+s\rvert \leqslant S_{\nts R}^{-1}, 
  \lvert y\rvert \leqslant \epsilon_R^{-1}\bigr\} .  
\]
Our goal in this section is to establish the following result.

\begin{proposition}\label{BH-prop:AlignedWindEst}
  Suppose that\/ $W$ is an aligned cube in\/ $H$ of side length\/
  $\eta$, that\/ $t\in\RR^d$, $S_R\leqslant R$, and
  that
  \begin{equation}\label{BH-eq:AlCubeEstCond1}
  	\bigl\{\theta\in\RR^d:(\theta,0)\in\cL^*,~ 0<|\theta|\le
  S_R^{-1}~\text{or }~0<|\theta+t|\le
  S_R^{-1}\bigr\} \, = \, \varnothing \ts .
  \end{equation}
  Let\/ $\epsilon^{}_R$ be the infimum of the set of all real numbers
  for which
\begin{equation}\label{BH-eq:AlCubeEstCond2}
  \cA_0\cap\cL^* \ts = \, \cA_t\cap\cL^*
  \ts = \, \varnothing \ts .
\end{equation}
Then, if\/ $\epsilon_R\leqslant \eta$, we have that
\[
  \sum_{\substack{(\theta,\theta^{\star}\nts )\in\cL^*\nts\\
      \theta+t\ne 0}}\!\!  \widehat{\varphi^{\rule{0pt}{1.5pt}}_{\nts
      R}}(\theta+t) \, \widehat{\psi^{}_{\nts R}}(\theta^{\star})\ts
  \, \ll \, R^{d-1} S^{}_R \ts \eta^{k-d}
  + R^{d}\epsilon^{}_{\nts R} \ts \eta^{k-d-1}  ,
\]
where the implied constant depends only on\/ $k$ and\/ $d$.
\end{proposition}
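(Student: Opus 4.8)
The plan is to drop the phases by the triangle inequality and then feed in the product estimate \eqref{BH-eq:FourCoeffProdEst}. Abbreviating the two factors by
\[
  \Phi(u) \, = \prod_{i=1}^{d}\min\bigl(2R,\tfrac{1}{|u_i|}\bigr)\min\bigl(1,\tfrac{1}{S_R|u_i|}\bigr)
  \quad\text{and}\quad
  \Psi(v) \, = \prod_{i=1}^{k-d}\min\bigl(\eta,\tfrac{1}{|v_i|}\bigr)\min\bigl(1,\tfrac{1}{\epsilon_R|v_i|}\bigr) ,
\]
the task reduces to bounding $\Sigma \defeq \sum \Phi(\theta+t)\,\Psi(\theta^{\star})$, summed over $(\theta,\theta^{\star})\in\cL^*$ with $\theta+t\neq 0$ (by Proposition~\ref{BH-prop.DualInj} this is the same as summing over $\theta\in L^{\circledast}$). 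The qualitative feature driving everything is that $\Phi$ is sharply peaked at $\theta+t=0$, where it attains its maximum $(2R)^d$, while $\Psi$ is peaked at $\theta^{\star}=0$, with maximum $\eta^{k-d}$. The entire role of the hypotheses \eqref{BH-eq:AlCubeEstCond1} and \eqref{BH-eq:AlCubeEstCond2} is to forbid a single lattice point from realising both peaks at once: emptiness of $\cA_t$ forces $|\theta^{\star}|>\epsilon_R^{-1}$ whenever $0<|\theta+t|\le S_R^{-1}$, and \eqref{BH-eq:AlCubeEstCond1} removes the degenerate points with $\theta^{\star}=0$.

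Accordingly, I would split $\Sigma=\Sigma_{\mathrm{far}}+\Sigma_{\mathrm{near}}$ according to whether $|\theta+t|>S_R^{-1}$ or $0<|\theta+t|\le S_R^{-1}$, treating the two pieces by different mechanisms, each responsible for one term in the claimed bound. For $\Sigma_{\mathrm{far}}$ the physical argument stays bounded away from the singularity, so $\Phi$ varies on scales no finer than $S_R^{-1}$ and a sum-to-integral comparison over $\cL^*$ is legitimate. Since $\pi(\cL^*)$ is dense while the internal slab $\{|\theta^{\star}|\lesssim 1/\eta\}$ carrying the bulk of $\Psi$ is fixed, I would compare $\Sigma_{\mathrm{far}}$ with $\dens(\cL^*)\int_{|u|>S_R^{-1}}\Phi(u)\dd u\cdot\int\Psi(v)\dd v$. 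The dominant configurations have $d-1$ physical coordinates near the peak (contributing $\asymp R$ each) and one coordinate at the transition scale $|u_i|\asymp S_R^{-1}$ (contributing $\asymp S_R$), with the internal factor at its peak $\asymp\eta^{k-d}$; this is exactly the first term $R^{d-1}S_R\ts\eta^{k-d}$.

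For $\Sigma_{\mathrm{near}}$ I would bound $\Phi\le(2R)^d$ and exploit the forced separation $|\theta^{\star}|>\epsilon_R^{-1}$, so that $\Psi(\theta^{\star})$ always has a coordinate in its decaying tail. The plan is to sum $\Psi$ over the admissible $\theta^{\star}$ by a dyadic decomposition into shells $|\theta^{\star}|\asymp 2^m\epsilon_R^{-1}$. One must resist bounding each shell by (occupancy)$\times$(maximum of $\Psi$): that estimate is too lossy and in fact fails to converge once $k-d\ge 2$. Instead I would bound the shell contribution by (occupancy)$\times\int_{\mathrm{shell}}\Psi$; the one-dimensional tail integral $\int_{2^m\epsilon_R^{-1}}^{2^{m+1}\epsilon_R^{-1}}\min(\eta,\tfrac{1}{|v|})\min(1,\tfrac{1}{\epsilon_R|v|})\dd v\asymp 2^{-m}$ supplies a geometric factor that makes the sum over $m$ converge and, after combining with $\Phi\le(2R)^d$ and the definition of $\epsilon_R$, yields $\Sigma_{\mathrm{near}}\ll R^{d}\epsilon_R\ts\eta^{k-d-1}$, the second term.

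The real difficulty, as always in such arguments, is the lattice-point counting that feeds the occupancies in $\Sigma_{\mathrm{near}}$. The relevant boxes are thin (side $\asymp S_R^{-1}\to 0$) in the physical directions and thick in the internal ones, so the naive density estimate $\dens(\cL^*)\cdot\mathrm{vol}$ is only a valid upper bound once a shell is large enough, and it badly over-counts at the innermost scales. There I would lean on the injectivity of $\pi|_{\cL^*}$ from Proposition~\ref{BH-prop.DualInj} together with the fact that, by the minimality in the definition of $\epsilon_R$ in \eqref{BH-eq:AlCubeEstCond2}, the critical box $\{0<|\theta+t|\le S_R^{-1},\,|\theta^{\star}|\le\epsilon_R^{-1}\}$ is empty, so the first non-empty shells carry only $O(1)$ points. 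A dual analogue of the uniform counting in Lemma~\ref{BH-lem.NestedWindow}, applied shell by shell, should supply the required occupancy bounds with all implied constants depending only on $k$ and $d$; keeping these constants uniform across the shells, and correctly handling the crossover from the $O(1)$-point regime to the density regime, is the technical crux of the proof.
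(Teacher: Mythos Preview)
Your sum-to-integral heuristic for $\Sigma_{\mathrm{far}}$ does not work, and this is the main gap. The sup-norm condition $|\theta+t|>S_R^{-1}$ only forces \emph{one} coordinate of $\theta+t$ to be large; the remaining $d-1$ physical coordinates may be arbitrarily close to $0$, so $\Phi$ still varies on the scale $1/R\ll S_R^{-1}$ in those directions and no integral comparison is legitimate. In fact the integral you write down gives the wrong order of magnitude: each one-dimensional factor $\int\min(2R,|u|^{-1})\min(1,(S_R|u|)^{-1})\dd u$ is $\asymp\log(R/S_R)$, so $\dens(\cL^*)\int\Phi\int\Psi$ is only polylogarithmic, nowhere near $R^{d-1}S_R\ts\eta^{k-d}$. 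You correctly identify the dominant configuration (one physical coordinate at scale $S_R^{-1}$, the rest at the peak), but the mechanism you propose cannot isolate it. The same problem recurs in your $\Sigma_{\mathrm{near}}$ treatment: the hybrid ``(occupancy)$\times\int_{\mathrm{shell}}\Psi$'' is not a well-defined estimate, and the appeal to a ``dual analogue of Lemma~\ref{BH-lem.NestedWindow}'' for uniform counting is exactly the kind of asymptotic density statement that carries no uniform constant across shells.

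What the paper does instead is both simpler and what is actually needed. First, it replaces your two-way near/far split by the full $\{1,2\}^k$ decomposition, classifying each of the $k$ coordinates separately as near or far (the regions $\cR_\sigma$). Second --- and this is the single counting fact that drives everything --- the hypothesis $\cA_0\cap\cL^*=\varnothing$ means that any translate of the box $[-S_R^{-1},S_R^{-1}]^d\times[-\epsilon_R^{-1},\epsilon_R^{-1}]^{k-d}$ contains at most one point of $\cL^*$: two distinct points would differ by a nonzero element of $\cA_0$. Hence each elementary box $\bigcap_i\cB_{\nu_i,m_i}\cap\bigcap_j\cB^\star_{\rho_j,n_j}$ (intersected with the appropriate $\cT_{i,1}$, $\cT^\star_{j,1}$) contains at most $2^k$ points of $\cL^*$. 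Now ``occupancy $\times$ max'' \emph{does} work, because on each such box the per-coordinate maximum of \eqref{BH-eq:FourCoeffProdEst} is tight, and summing the resulting $\prod m_i^{-2}\prod n_j^{-2}$ over $\NN^{j_2}\times\NN^{j_2^\star}$ converges. This yields $\varSigma_\sigma\ll R^{d-j_2}S_R^{j_2}\epsilon_R^{j_2^\star}\eta^{k-d-j_2^\star}$, and since $S_R\leqslant R$ and $\epsilon_R\leqslant\eta$ the two worst cases $(j_2,j_2^\star)=(1,0)$ and $(0,1)$ give exactly the claimed bound. No density estimates, no Lemma~\ref{BH-lem.NestedWindow}, no crossover between regimes: the minimality built into the definition of $\epsilon_R$ supplies the only lattice-point bound required, and it is used uniformly across the entire sum, not just your $\Sigma_{\mathrm{near}}$.
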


Note that condition \eqref{BH-eq:AlCubeEstCond1} will be satisfied as
long as $S^{}_R$ is large enough. The role of this condition is to ensure
that the infimum defining $\epsilon^{}_{\nts R}$ exists. Furthermore,
for all $S^{}_R$ sufficiently large (depending on $t$), the condition
that $\epsilon^{}_{\nts R} \leqslant \eta$ will also be satisfied.

Proposition~\ref{BH-prop:AlignedWindEst} easily implies the statement
of Theorem~\ref{BH-THM.CONVERGENCE1} for windows which are aligned
cubes. However, we defer the details of this claim until the next
section, when we establish the theorem in its full generality.

In Section~\ref{BH-subsec:k=2,d=1}, we shall complete the proof of
Proposition~\ref{BH-prop:AlignedWindEst} for $k=2$ and $d=1$, followed
by the general case in Section~\ref{BH-subsec:k,d,general}. The proof
in higher dimensions is similar in structure, but is notationally more
complex and thus a little less transparent. We hope that this is
compensated by the explicit treatment of the special case.

\subsection{Special case ($k=2, d=1$)}\label{BH-subsec:k=2,d=1}

First, let us suppose that $k=2$ and $d=1$, that $W$ is an aligned
cube in $H$ (in this case, simply an interval) of side length $\eta$,
and that the hypotheses of Proposition~\ref{BH-prop:AlignedWindEst}
are all satisfied. Given $t\in\RR$, which is arbitrary but fixed, we
define subregions of $\RR^2=\RR \ts\ts {\times} \ts\ts \RR$ as
follows. First, set
\begin{align*}
  \cT^{}_{1,1}\, &= \, \{(x,y)\in\RR^2:\lvert x+t\rvert
                \leqslant S_{R}^{-1} \} \ts , \\[1mm]
  \cT^{}_{1,2}\, &= \, \{(x,y)\in\RR^2:\lvert x+t\rvert >
                S_{R}^{-1} \} \ts , \\[1mm]
  \cT_{1,1}^{\star} \, &= \, \{(x,y)\in\RR^2:\lvert y\rvert \leqslant 
       \epsilon_{\nts R}^{-1}\} \ts , \quad\text{and} \\[1mm]
  \cT_{1,2}^{\star} \, &= \, \{(x,y)\in\RR^2:\lvert y\rvert >
                   \epsilon_{\nts R}^{-1}\} \ts . 
\end{align*}
Also, for each $m,n\in\NN$, define
\begin{align*}
  \cB_{1,m}\, &= \, \{(x,y)\in\RR^2:m \ts S_{R}^{-1} <
                \lvert x^{}+t^{}\rvert \leqslant (m+1)S_{R}^{-1}\} \ts ,
                \quad\text{and}  \\[1mm]
  \cB_{1,n}^{\star}\, &= \, \{(x,y)\in\RR^2:n\ts\epsilon_{R}^{-1} <
                \lvert y\rvert \leqslant (n+1)\ts\epsilon_{R}^{-1}\} \ts ,
\end{align*}
so that $\cT^{}_{1,2}$ is the disjoint union of the sets $\cB_{1,m}$
and $\cT_{1,2}^{\star}$ is the disjoint union of the sets
$\cB_{1,n}^{\star}$.  Next, for each
$\sigma\in\{1,2\}\ts {\times} \ts \{1,2\}$, writing
$\sigma=(\sigma^{}_1,\sigma^{}_2)$, define
\[
\begin{split}  
   \cR^{}_\sigma \, &  = \, 
   \cT^{}_{1,\sigma^{}_1}\nts\cap\ts
   \cT_{1,\sigma^{}_2}^{\star}  \quad \text{and} \\[2mm]
    \varSigma_\sigma \, & = \!
    \sum_{\substack{(\theta,\theta^{\star}\nts )\in\cL^*\nts\cap
    \cR_\sigma\\ \theta+t\ne 0}}\!\!
    \left|\widehat{\varphi^{\rule{0pt}{1.5pt}}_{\nts R}}(\theta+t) \, 
    \widehat{\psi^{}_{\nts R}}(\theta^{\star})\right|\ts .
\end{split}
\]
This notation may seem overly complicated for the case at hand, but it
will allow for easier generalisation to higher dimensions in the next
section.

Now, for fixed $t$, we will consider how to bound $\varSigma_\sigma$,
for each of the four different choices of $\sigma$. First, since
 $\cA_t\cap \cL^*=\varnothing$, we have that $\varSigma_{(1,1)}=0$.

To bound $\varSigma^{}_{(2,2)}$, write
\[
  \cR^{}_{(2,2)} \, = \bigcup_{m,n\in\NN}\cB^{}_{1,m}\cap\cB_{1,n}^\star,
\]
and note that, by the fact that $\cA_0\cap \cL^*=\varnothing$, each of
the sets $\cB^{}_{1,m}\cap\cB_{1,n}^\star$ contains at most 4 lattice
points{\ts}\footnote{To see this more clearly, it may also be helpful
  to observe that, since $\pi|_\cL^*$ is injective
  (Proposition~\ref{BH-prop.DualInj}), the only lattice point of
  $\cL^*$ in the closure of $\cA_0$ is the point $0$.} of
$\cL^\star$. Thus, with the bounds from
\eqref{BH-eq:FourCoeffProdEst},
\begin{equation}\begin{split}
  \varSigma^{}_{(2,2)}\, & \leqslant
  \sum_{\substack{(\theta,\theta^{\star}\nts )\in
   \cL_{\vphantom{t}}^*\nts\cap\cR_{(2,2)}}}
   \myfrac{1}{S^{}_{\nts R} \, \lvert \theta+t\rvert^2}\,
    \myfrac{1}{\epsilon^{}_{\nts R}\,
   \lvert \theta^{\star}\rvert^2}\label{BH-eq:Err1}\\[1mm]
 &\leqslant \, \myfrac{4}{S^{}_{\nts R} \ts \epsilon^{}_{\nts R}}
    \sum_{m,n\in\NN} \frac{1}{(m S_R^{-1})^2(n\ts \epsilon_R^{-1})^2}
    \, \ll \,  S^{}_{\nts R}\ts\ts \epsilon^{}_{\nts R} \ts .
      \end{split}
\end{equation}
Similarly, since
\[
  \cR^{}_{(2,1)} \, = \bigcup_{m\in\NN}
  \cB^{}_{1,m}\cap\cT_{1,1}^\star \ts ,
\]
we find that
\begin{equation}\label{BH-eq:Err2}
  \varSigma^{}_{(2,1)}\,  \leqslant
  \sum_{\substack{(\theta,\theta^{\star}\nts )\in\cL_{\vphantom{t}}^*\nts
	\cap\ts\cR_{(2,1)}}}\myfrac{\eta}{S^{}_{\nts R}\,\lvert
	\theta+t\rvert^2} 
      \, \leqslant \, \myfrac{4\ts \eta}{S^{}_{\nts R}}\sum_{m\in\NN}
      \frac{1}{(m\ts S_R^{-1})^2} \, \ll \, S^{}_{\nts R}\ts\ts\eta \ts .
\end{equation}

Finally, writing
\[
  \varSigma^{}_{(1,2)} \, = \bigcup_{n\in\NN}\cT^{}_{1,1}
  \cap\cB_{1,n}^{\star} \ts ,
\]
we obtain
\begin{equation}\label{BH-eq:Err4}
\varSigma^{}_{(1,2)} \, \leqslant
  \!\sum_{\substack{(\theta,\theta^{\star}\nts )\in
   \cL_{\vphantom{t}}^*\nts \cap \ts \cR_{(1,2)}}}\!
   \myfrac{2R}{\epsilon^{}_{\nts R}\, \lvert
   \theta^{\star}\nts\rvert^2} 
   \, \leqslant \,\myfrac{8R}{\epsilon^{}_{\nts R}}
   \sum_{n\in\NN}\myfrac{1}{(n\ts\epsilon^{-1}_R)^2}\,
   \ll \, R \ts \epsilon^{}_{\nts R}  \ts.
\end{equation}
Combining the estimates in Eqs.~\eqref{BH-eq:Err1}--\eqref{BH-eq:Err4}
establishes Proposition~\ref{BH-prop:AlignedWindEst} for $k=2$ and
$d=1$.

\subsection{Higher dimensions}\label{BH-subsec:k,d,general}

The proof of Proposition~\ref{BH-prop:AlignedWindEst}, for general $k$
and $d$, parallels the structure of the proof in the previous
section. To this end, define subregions of the product space
$\RR^k=\RR^d\ts\ts {\times} \ts\ts \RR^{k-d}$ as follows. For
$1\leqslant i\leqslant d$, let
\begin{align*}
  \cT^{}_{i,1} \, & = \, \{(x,y)\in\RR^d\nts\times\RR^{k-d}:
       \lvert x^{}_i+t^{}_i\rvert\leqslant S_{R}^{-1} \} \ts , \\[1mm]
  \cT^{}_{i,2} \, & = \, \{(x,y)\in\RR^d\nts\times\RR^{k-d}:
                    \lvert x^{}_i+t^{}_i\rvert > S_{R}^{-1}\} \ts ,
\end{align*}
and, for $1\leqslant i\leqslant k-d$, let
\begin{align*}
  \cT_{i,1}^{\star} \, & = \, \{(x,y)\in\RR^d\nts\times\RR^{k-d}:
        \lvert y^{}_i\rvert \leqslant 
	\epsilon_{\nts R}^{-1}\} \ts , \quad\text{and} \\[1mm]
  \cT_{i,2}^{\star} \, & = \, \{(x,y)\in\RR^d\nts\times\RR^{k-d}:
        \lvert y^{}_{i}\rvert > 
	\epsilon_{\nts R}^{-1}\} \ts .
\end{align*}
For $1\leqslant i\leqslant d$ and $m\in\NN$, set
\[
   \cB^{}_{i,m} \, = \,
   \{(x,y)\in\RR^d\nts\times\RR^{k-d}:mS_{R}^{-1}
   <\lvert x^{}_i+t^{}_i\rvert \leqslant (m+1)S_{R}^{-1}\} \ts ,
\]
so that $\cT^{}_{i,2}$ is the disjoint union of the sets
$\cB^{}_{i,m}$. Similarly, for each index $1\leqslant i\leqslant k-d$
and every $n\in\NN$, set
\[
   \cB^{\star}_{i,n} \, = \,
   \{(x,y)\in\RR^d\nts\times\RR^{k-d}:n\epsilon_{R}^{-1}
   < \lvert y^{}_i\rvert \leqslant (n+1)\ts \epsilon_{R}^{-1}\} \ts ,
\]
so that $\cT^{\star}_{i,2}$ is the disjoint union of the sets
$\cB^{\star}_{i,n}$.

For each $\sigma\in\{1,2\}^k$, with
$\sigma=(\sigma^{}_1,\ldots ,\sigma^{}_k)$, we define
\[
  \cR^{}_\sigma \,  = \, \bigcap_{i=1}^d\cT^{}_{i,\sigma^{}_{\nts\nts i}}
    \, \cap\, \bigcap_{j=1}^{k-d}\cT_{j,\sigma^{}_{\nts\nts j+d}}^{\star}
             \quad \text{and} \quad
  \varSigma_\sigma \,  =\!\sum_{\substack{(\theta,\theta^{\star}\nts )
  \in\cL_{\vphantom{t}}^*\nts \cap \ts \cR_\sigma \\ \theta+t\ne 0}}\!\!
  \left|\widehat{\varphi^{\rule{0pt}{1.5pt}}_{\nts R}}(\theta+t)\,
  \widehat{\psi^{}_{\nts R}}(\theta^{\star}\nts )\right|\ts .
\]
Given such a $\sigma$, for $\ell\in\{1,2\}$, we set
\[
  j^{}_\ell\,  = \, \card\{1\leqslant i\leqslant d:
     \sigma^{}_{\nts i}=\ell\} \quad \text{and} \quad
  j_\ell^{\star}\,  =\, \card\{d+1\leqslant i\leqslant k:
  \sigma^{}_{\nts i}=\ell\} \ts .
\]
Condition~\eqref{BH-eq:AlCubeEstCond2} in the statement of
Proposition~\ref{BH-prop:AlignedWindEst} guarantees
$\varSigma_\sigma=0$ whenever $j^{}_2=j_2^{\star}=0$. Therefore,
suppose that $j_2>0$ or $j_2^\star>0$ and write
\[
\begin{split}  
     \{\nu^{}_{1},\ldots,\nu^{}_{j^{}_{2}}\}
     \, & = \, \{1\leqslant i\leqslant d : \sigma^{}_{i} = 2 \}
     \, \subseteq \, \{1,\ldots ,d\} \quad \text{and} \\[2mm]
    \{\rho^{}_{1},\ldots,\rho^{}_{j^{\star}_{2}}\}
    \,& = \, \{d+1\leqslant i\leqslant k : \sigma^{}_{i} = 2 \}
    \, \subseteq \, \{d+1,\ldots ,k\} \ts .
\end{split}
\]
If $j_2=0$ or $j_2^\star=0$, one of these sets could be empty, which
should be interpreted appropriately in the equations to follow (empty
products are assumed to equal 1). The estimate from
\eqref{BH-eq:FourCoeffProdEst} implies for
$(\theta,\theta^{\star}\nts )\in\cL^*\nts\cap\cR^{}_{\sigma}$ that
\begin{equation}\label{BH-eq:FourCoeffProdEst2}
 \left|\widehat{\varphi^{\rule{0pt}{1.5pt}}_{\nts R}}(\theta+t)\,
  \widehat{\psi^{}_{\nts R}}(\theta^{\star}\nts )\right| \,\leqslant \,
  \myfrac{(2R_{\vphantom{t}})^{\ts j^{}_1}
	\eta^{j_1^{\star}}}{S_{\nts R}^{j^{}_2}\epsilon^{j_2^\star}_{\nts R}}
    \prod_{\substack{i=1\\\sigma^{}_{\nts i}=2}}^d
    \myfrac{1}{\lvert \theta^{}_i+t^{}_i\rvert^2}
    \prod_{\substack{i=1\\\sigma^{}_{\nts d+i}=2}}^{k-d}
    \myfrac{1}{\lvert 
	\theta_i^{\star}\nts\rvert^2}\ts.
\end{equation}
The set $\cR^{}_{\sigma}$ is a disjoint union over all $m\in\NN^{j_2}$
and $n\in\NN^{j_2^\star}$ of the sets
\[
  \bigcap_{\substack{i=1 \\ \sigma_i=1}}^d\cT_{i,1} \, \cap \,
  \bigcap_{i=1}^{j_2}\cB_{\nu_i,m_i} \, \cap \!
  \bigcap_{\substack{i=1 \\
      \sigma_{i+d}=1}}^{k-d}\cT^\star_{i,\sigma_{i+d}} \, \cap \,
  \bigcap_{i=1}^{j^\star_2}\cB^\star_{\rho_i,n_i} \ts ,
\]
and, since $\cA_0\cap\cL^*=\varnothing$, each of these sets
contains at most $2^k$ points of $\cL^*$. It follows from this in
conjunction with Eq.~\eqref{BH-eq:FourCoeffProdEst2} that
\[
  \varSigma_\sigma \,
  \leqslant \myfrac{2^k(2R_{\vphantom{t}})^{\ts j^{}_1}
    \eta^{j_1^{\star}}}{S_{\nts R}^{j^{}_2}\epsilon^{j_2^\star}_{\nts R}}
    \sum_{m\in\NN^{j_2}}\sum_{n\in\NN^{j_2^\star}} \prod_{i=1}^{j_2}
    \frac{1}{\bigl( m^{}_i S_R^{-1} \bigr)^2}\prod_{i=1}^{j_2^\star}
    \frac{1}{\bigl( n^{}_i \epsilon_R^{-1} \bigr)^2}
    \, \ll \, R^{d-j_2}S_R^{j_2}\epsilon_R^{j_2^\star}\eta^{k-d-j_2^\star} \ts,
\]
where, in the last step, we have also used the facts that
$j_1+j_2=d$ and $j_1^\star+j_2^\star=k-d$. The largest
potential error terms arise from the cases when $(j_2,j_2^\star)$ is
$(1,0)$ or $(0,1)$, which completes the proof of
Proposition~\ref{BH-prop:AlignedWindEst}.

\section{Proof of 
Theorem~\protect\ref{BH-THM.CONVERGENCE1}: General
case}\label{BH-sec:general-case}

Let us now suppose that $W$ is any window for which $\oplam(W)$ is a
regular model set. For each $n\in\NN$, let
$\bigl\{\cD_1^{(n)},\cD_2^{(n)},\ldots ,\cD_{M(n)}^{(n)}\bigr\}$ be
the collection of dyadic cubes of the form
\[
  \prod_{j=1}^{k-d} \left[\frac{i_j}{2^n},\frac{i_j+1}{2^n} \right] \,
  \subseteq \, H, \qquad \text{with }\ts i_1,\ldots ,i_{k-d}\in\ZZ \ts ,
\]
which intersect $\partial W$. Similarly, let
$\bigl\{\cI_1^{(n)},\cI_2^{(n)},\ldots ,\cI_{N(n)}^{(n)}\bigr\}$ be
the collection of dyadic cubes of the above form which lie in the
interior of $W\!$, and let
\[
  \cD^{(n)} \, = \bigcup_{i=1}^{M(n)}\cD_i^{(n)}
  \quad\text{and}\quad
  \cI^{(n)} \, =\bigcup_{i=1}^{N(n)}\cI_i^{(n)}.
\]
Since $W$ has almost no boundary by assumption, we have
$\vol (\cD^{(n)}) \xrightarrow{\ts n\to\infty \ts} 0$ and
\begin{equation}\label{BH-eq:I-bound}
   \vol (\cI^{(n)}) \, = \, \frac{N(n)}{2^{n(k-d)}}
   \, \leqslant \, \vol (W) \ts .
\end{equation}

From Eq.~\eqref{BH-eq:IndicFnSum}, we have that
\begin{align}
  \vol & (B^{}_{\nts R})\,  a^{}_{\nts R}(t)\, =\sum_{\lambda\in L}
  e(-t\lambda)\,\bs{1}^{}_{B^{}_{\nts R}}(\lambda)
  \,\bs{1}^{}_{W}(\lambda^{\nts\star}) \ts \nonumber\\
  & = \sum_{\lambda\in L} e(-t\lambda)\,\bs{1}^{}_{B^{}_{\nts R}}(\lambda)
    \,\bs{1}^{}_{\cI^{(n)}}(\lambda^{\nts\star}) +
    O \biggl( \, \sum_{\lambda\in L} \bs{1}^{}_{B^{}_{\nts R}}(\lambda)
    \,\bs{1}^{}_{W\setminus\cI^{(n)}}(\lambda^{\nts\star})
    \biggr) \nonumber\\[1mm]
  & = \sum_{i=1}^{N(n)} \biggl( \, \sum_{\lambda\in L}
    e(-t\lambda)\,\bs{1}^{}_{B^{}_{\nts R}}(\lambda)
    \,\bs{1}^{}_{\cI_i^{(n)}}(\lambda^{\nts\star})\biggr) +
    O \biggl(\, \sum_{\lambda\in L} \bs{1}^{}_{B^{}_{\nts R}}(\lambda)
    \,\bs{1}^{}_{\cD^{(n)}}(\lambda^{\nts\star})\biggr).
    \label{BH-eq:CubeSlice1}
\end{align}
We now focus on each of the inner sums in the first term of
Eq.~\eqref{BH-eq:CubeSlice1}, with a view towards using
Propositions~\ref{BH-prop.F_RApprox} and
\ref{BH-prop:AlignedWindEst}. To avoid notational ambiguity, for each
choice of $R$, $S^{}_R$, $\epsilon^{}_R$, $n$ and $i$, we let
$F_R^{(i,n)}$ and $\psi_R^{(i,n)}$ denote the functions from the proof
of Proposition~\ref{BH-prop.F_RApprox} that correspond to the window
$\cI_i^{(n)}$ (note that the function $\varphi^{}_R$ does not depend
on the choice of window). Since $\cI_i^{(n)}$ is an aligned cube, we
have from Eq.~\eqref{BH-eq:PSFforAligned} that
\begin{equation}\label{BH-eq:PSFforAligned2}
  \sum_{\lambda\in L} e(-t\lambda)\,
  F^{(i,n)}_{\nts R}(\lambda,\lambda^{\nts\star})
  \, = \, \dens (\cL) \sum_{\substack{\theta\in L^{\nts\circledast}}}
  \widehat{\varphi^{\rule{0pt}{1.5pt}}_{\nts\nts R}}(\theta+t)\, 
\widehat{\psi^{(i,n)}_{\nts R}}(\theta^{\star} ) \ts . 
\end{equation}
We now divide our analysis into two cases.
\smallskip

\noindent \textbf{Case 1 ($t\notin L^{\circledast}$):} In this case,
there is no contribution to a `main term' in the above sum, and we may
apply Proposition~\ref{BH-prop:AlignedWindEst} directly. Let $C_n>0$
be large enough so that
\[
  \big\{\theta\in\RR^d: (\theta,0)\in\cL^*,~ 0 < |\theta| \le
  C_n^{-1/2}~\text{or }~0 < |\theta+t| \le
  C_n^{-1/2} \big\} \, = \, \varnothing \ts ,
\]
and also so that
\[
   C_n^{-1/2} \, < \, \min\big\{|\theta+t|>0 :
   (\theta,\theta^{\star})\in\cL^*, |\theta^{\star}|
   \leqslant 2^{n}\big\} ,
\]
and
\[
   C_n^{-1/2} \, < \, \min\big\{|\theta|>0 :
   (\theta,\theta^{\star})\in\cL^*,
   |\theta^{\star}|\leqslant 2^{n}\big\} .
\]
Then, for each $R\geqslant C^{}_n$, set $S^{}_{\nts R}=\sqrt{R\ts}$. The
requirements on $C_n$ guarantee that condition
\eqref{BH-eq:AlCubeEstCond1} in the statement of the proposition is
satisfied, and also that the quantity defined as an infimum in the
proposition, which we will label as $\epsilon^{}_{\nts R,n}$, is at most
$2^{-n}$ (i.e. so that the bound in the conclusion of the proposition
will hold when applied with $\eta=2^{-n}$).

It is clear that the numbers $C_n$ may be chosen as above so that
$C_n$ tends monotonically to $\infty$ as $n\to\infty$. This also
guarantees that $\epsilon^{}_{\nts R,n}$ tends monotonically to $0$ as
$n\to\infty$.

Applying Proposition~\ref{BH-prop:AlignedWindEst}, together with
Eq.~\eqref{BH-eq:PSFforAligned2}, we have that, whenever
$R\geqslant C_n$,
\begin{align}
  \sum_{i=1}^{N(n)} \biggl( \,
  & \sum_{\lambda\in L} e(-t\lambda)\, F^{(i,n)}_{\nts R}
    (\lambda,\lambda^{\nts\star})\biggr) \nonumber \\[1mm]
  & \ll \, N(n)\bigl( R^{d-1/2} 2^{-n(k-d)} +R^d \ts
    \epsilon^{}_{\nts R,n}2^{-n(k-d-1)} \bigr) \nonumber \\[2mm]
  & \ll \, R^d \bigl( R^{-1/2} + 2^n \ts \epsilon^{}_{\nts R,n}
    \bigr),  \label{BH-eq:CubeSlice2}
\end{align}
where Eq.~\eqref{BH-eq:I-bound} was used for the last line.  Now,
choose $R_n\geqslant C_n$ in such a way that, for all
$R\geqslant R_n$, the following three conditions are satisfied,
\begin{align*}
\text{(i)}&\hspace*{10bp}
      R^{-1/2} + 2^n\epsilon^{}_{\nts R,n} \, \leqslant \,
      \vol (\cD^{(n)}) \ts , \\[1mm]
\text{(ii)}&\hspace*{10bp}
    \sum_{\lambda\in L} \bs{1}^{}_{B^{}_{\nts R}}(\lambda)
    \,\bs{1}^{}_{\cD^{(n)}}(\lambda^{\nts\star}) \, \leqslant \,
    2^{d+1}\dens(\cL)\vol (\cD^{(n)})R^d, \quad\text{ and} \\[1mm]
\text{(iii)}& \hspace*{10bp}
              \text{for each } 1\leqslant i\leqslant N(n),
              \text{ one has the estimate} \\
&\biggl|\sum_{\lambda\in L} e(-t\lambda)\,\bs{1}^{}_{B^{}_{\nts R}}(\lambda)
  \,\bs{1}^{}_{\cI_i^{(n)}}(\lambda^{\nts\star}) \, -\sum_{\lambda\in L}
  e(-t\lambda)\, F^{(i,n)}_{\nts R}(\lambda,\lambda^{\nts\star})
   \biggr| \, \leqslant \, \frac{\vol (\cD^{(n)})R^d}{2^{n(k-d)}} \ts .
\end{align*}
Choosing $R_n$ in this way is possible due to the fact that
$\epsilon^{}_{\nts R,n}$ tends to $0$ as $R\to\infty$ (for (i)),
Lemma~\ref{BH-lem.NestedWindow} (for (ii)), and
Proposition~\ref{BH-prop.F_RApprox} (for (iii)). Combining
Eqs.~\eqref{BH-eq:CubeSlice1} and \eqref{BH-eq:CubeSlice2} we get
that, for all $n\in\NN$ and for all $R\geqslant R_n$, one has
\begin{align*}
  \vol (B^{}_R)\, a^{}_R (t)  &=\sum_{i=1}^{N(n)}
         \biggl(\, \sum_{\lambda\in L}
    e(-t\lambda)\, F^{(i,n)}_{\nts R}(\lambda,\lambda^{\nts\star})
    \biggr) + O \bigl( \vol(\cD^{(n)})R^d\bigr) \\[1mm]
    &\ll \, \vol(\cD^{(n)})R^d \ts ,
\end{align*}
which clearly gives $a^{}_{R} (t) \ll \vol(\cD^{(n)})$.  Finally,
letting $n\to\infty$ (which also implies $R_n\to \infty$) completes
the proof of Theorem~\ref{BH-THM.CONVERGENCE1} in this case.
\smallskip

\noindent \textbf{Case 2 ($t\in L^{\circledast}$):}
Here, the analysis is essentially the same as in Case 1,
except that there is one more term in the sum on the RHS of
Eq.~\eqref{BH-eq:PSFforAligned2} (corresponding to $\theta=-t$), which
is not accounted for by Proposition~\ref{BH-prop:AlignedWindEst}. At
this point it should be clear that, because of Proposition
\ref{BH-prop.DualInj}, there can be at most one such term. Taking this
into account, we have as before that, for $n\in\NN$ and
$R\geqslant R_n$,
\[
  \vol (B^{}_R) \, a^{}_R(t) \, = \,
  \dens(\cL) \, \widehat{\varphi^{}_R}(0)
  \sum_{i=1}^{N(n)}\widehat{\psi_R^{(i,n)}}(-t^{\star})
  \, + \, O \bigl( \vol(\cD^{(n)})R^d \bigr).
\]
Note that $\widehat{\varphi^{}_R}(0) = \vol (B^{}_R)$,
together with
\begin{align}
  \lim_{R\to\infty}\widehat{\psi_R^{(i,n)}}(-t^{\star}) \,
  & = \, \widehat{\bs{1}^{}_{\cI_i^{(n)}}}(-t^{\nts\star}) \ts ,
    \quad\text{ and} \nonumber \\[1mm]
  \lim_{n\to\infty}\sum_{i=1}^{N(n)}\widehat{\bs{1}^{}_{\cI_i^{(n)}}}
  (-t^{\star}) \, & = \lim_{n\to\infty}\widehat{\bs{1}^{}_{\cI^{(n)}}}
  (-t^{\star}) \, = \, \widehat{\bs{1}^{}_{W}}(-t^{\nts\star}) \ts .
   \label{BH-eq:CubeSlice3}
\end{align}
Thus, possibly after increasing $R_n$, we can ensure that, for
each $1\leqslant i\leqslant N(n)$,
\[
  \biggl| \widehat{\psi_R^{(i,n)}}(-t^{\star}) -
  \widehat{\bs{1}^{}_{\cI_i^{(n)}}}(-t^{\nts\star})\biggr|
  \, \leqslant \, \frac{\vol(\cD^{(n)})}{2^{n(k-d)}} \ts .
\]
Then, for all $n\in\NN$ and $R\geqslant R_n$, we can use the triangle
inequality in conjunction with Eq.~\eqref{BH-eq:I-bound} to obtain
\[
  a^{}_R (t) \, = \, \dens(\cL) \,
  \widehat{\bs{1}^{}_{\cI^{(n)}}}(-t^{\star})
  \, + \, O \bigl( \vol(\cD^{(n)}) \bigr).
\]
By Eq.~\eqref{BH-eq:CubeSlice3}, this implies 
\[
  \lim_{R\to\infty} a^{}_R (t) \, = \,
  \dens(\cL) \, \widehat{\bs{1}^{}_{W}}(-t^{\nts\star}) \ts ,
\]
which completes our argument.

\bigskip
\bigskip

\section*{Acknowledgements}

It is our pleasure to thank Philipp Gohlke and Neil Ma\~{n}ibo for
discussions and suggestions, and Uwe Grimm for comments and support
with the manuscript in its early stage.  This work was supported by
the German Research Foundation (DFG, Deutsche Forschungsgemeinschaft)
under grant SFB 1283/2 2021 -- 317210226, and by the National
Science Foundation (NSF) under grant DMS 2001248.

\end{document}